\documentclass[11pt,a4paper]{article}

\usepackage{amsmath,amsfonts,amssymb,latexsym,graphics,epsfig,url}
\usepackage[dvipsnames]{xcolor}
\usepackage[normalem]{ulem}
\usepackage{amsthm}
\usepackage[english]{babel}
\usepackage{graphicx,tikz}
\usetikzlibrary{decorations.markings}
\usepackage{soul}
\usepackage[utf8]{inputenc}
\usepackage[T1]{fontenc}
\usepackage{hyperref} 
\hypersetup{colorlinks,
    linkcolor=blue, 
    anchorcolor=blue,
    citecolor=blue}
\tikzstyle{directed}=[postaction={decorate,
	decoration={markings,mark=at position 0.6 with {\arrow{latex}}}
}]
\tikzstyle{vertex}=[circle, draw, inner sep=0pt, minimum size=3pt]
\newcommand{\vertex}{\node[vertex]}

\newtheorem{theorem}{Theorem}[section]
\newtheorem{lemma}[theorem]{Lemma}

\newtheorem{proposition}[theorem]{Proposition}

\newtheorem{example}[theorem]{Example}

\def\r{\mbox{\boldmath $r$}}

\def\vec0{\mbox{\boldmath $0$}}

\def\AA{\mbox{$\mathcal{A}$}}

\begin{document}

\title{On Generalized Token Graphs
\thanks{The research of the first and last authors has been supported by National Natural Science Foundation of China (No.~12471334, No.~12131013), and Shaanxi Fundamental Science Research Project for Mathematics and Physics (No. 22JSZ009). The research of the second and third authors was funded by AGAUR from the Catalan Government under project 2021SGR00434 and MICINN from the Spanish Government under project PID2020-115442RB-I00.
The third author's research is also supported by a grant from the Universitat Polit\`ecnica de Catalunya, reference AGRUPS-2024.
The fourth author's research is 
supported by grants 
PID2023-150725NB-I00 funded by MICIU/AEI/10.13039/501100011033PID2023-150725NB-I00 and Gen. Cat. DGR 2017SGR1336.
}}

\author{Xiaodi Song$^{a,b}$, Cristina Dalf\'o$^b$,  
Miquel \`Angel Fiol$^c$, Merc\`e Mora$^c$, Shenggui Zhang$^{a}$\\ 
$^a${\small School of Mathematics and Statistics, Northwestern Polytechnical University}\\
    {\small Xi'an-Budapest Joint Research Center for Combinatorics, Northwestern Polytechnical University} \\
    {\small Xi'an, Shaanxi, P.R. China, {\tt songxd@mail.nwpu.edu.cn}, 
    {\tt sgzhang@nwpu.edu.cn}}\\
$^b${\small Departament de Matem\`atica, Universitat de Lleida} \\
		{\small Igualada (Barcelona), Catalonia, {\tt cristina.dalfo@udl.cat}}\\
$^c${\small Departament de Matem\`atiques, Universitat Polit\`ecnica de Catalunya} \\
    	{\small Barcelona Graduate School, Institut de Matem\`atiques de la UPC-BarcelonaTech (IMTech)}\\
    	{\small Barcelona, Catalonia, {\tt miguel.angel.fiol@upc.edu}, 
        {\tt merce.mora@upc.edu}}
}
\date{}
\maketitle

\begin{abstract}
The vertices of a $k$-token graph of a graph $G$ correspond to $k$ indistinguishable tokens placed on $k$ different vertices of $G$. Changing some conditions on both the nature of the tokens and the number of tokens allowed in each vertex of $G$, we define a generalization of token graphs, which we call generalized token graphs or simply supertoken graphs, which have different applications. Depending on the above conditions, different families of graphs (such as the Cartesian $k$-th power of $G$ by itself) are obtained, and we present some of their properties, including order, size, and connectivity. 
\end{abstract}

\noindent{\em Keyword:} Token graph, Cartesian product, connectivity.

\noindent{\em MSC2010:} 05C15, 05C10, 05C50.

\section{Definition and particular cases}

Given some integers $n,k$, we denote by $C^n_k$ (with $k\le n$) and $C\!R^n_k$, respectively, the sets of combinations and multisets with repetition of $n$ elements taken $k$ at a time. Recall that the cardinalities of both sets are
$|C^n_k|=\binom{n}{k}$ and $|C\!R^n_k|=\binom{n+k-1}{k}$.

Let $G=(V,E)$ be a graph on $n$ vertices. Let $k$ and $s$ be positive integers such that $s\le k$.
Then,  the {\it $k$-supertoken} graph $F_{k}^s(G)$ [or $F^s_{k\times 1}(G)$] is the graph in which each vertex corresponds to a distribution of $k$ equal [or different] tokens between the $n$ vertices of $G$, and in such a way that no vertex can receive more than $s$ tokens.
Thus, every vertex of the $k$-supertoken graph corresponds to a multiset of unordered [or ordered] symbols representing the vertices of $G$.
Moreover, two vertices of $F_{k}^s(G)$ [or $F^s_{k\times 1}(G)$] are adjacent if the symmetric difference of their corresponding multisets 
is a pair of adjacent vertices $u,v\in V$. In other words, each vertex of the $k$-supertoken graph is defined by the multiset of vertices of $G$ having a token, and we can go from a vertex to an adjacent one in $F_{k}^s(G)$ [or $F^s_{k\times 1}(G)$] by simply moving, in $G$, one token to a feasible adjacent vertex.
In Figure~\ref{C_4+2-supertokens(C_4)}$(a)$-$(e)$ there are different 2-supertokens graphs of the cycle $G=C_4$ ($n=4$).
\begin{figure}[t]
    \begin{center}
  \includegraphics[width=15cm]{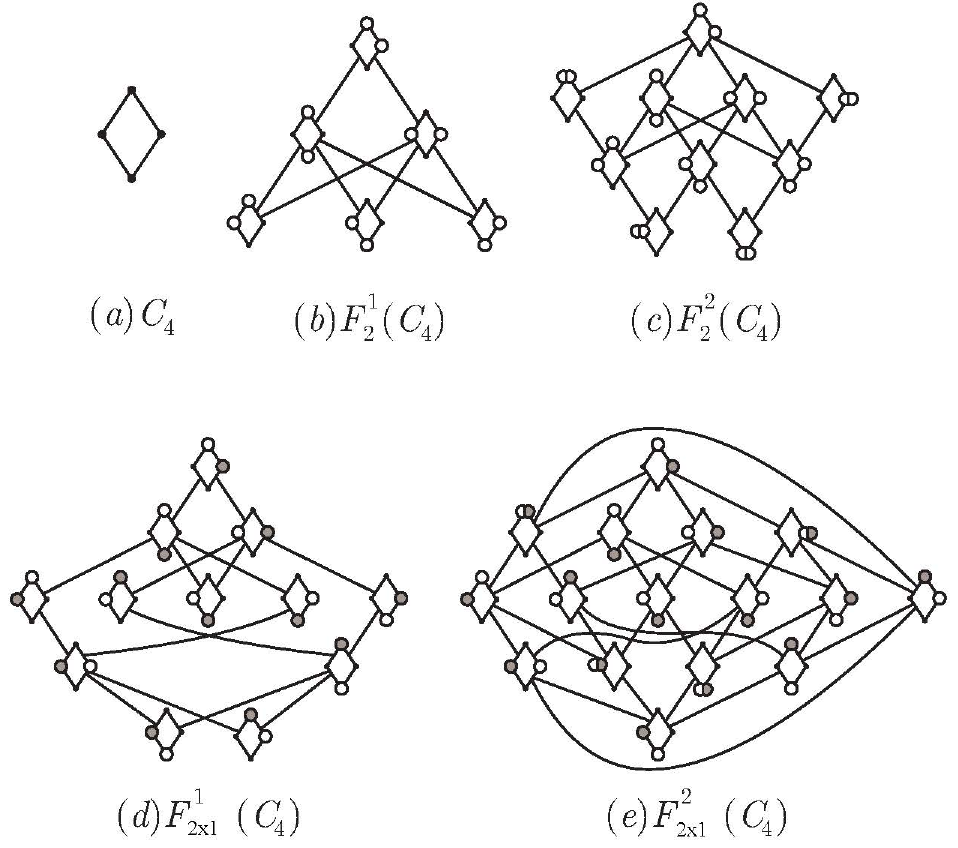}
  \end{center}
  \vskip-10.75cm
  \caption{$(a)$ The graph $C_4$; $(b)$ The supertoken graph $F_2^1(C_4)=F_2(C_4)\cong K_{2,4}\subset F_2^2(C_4)$; $(c)$ The supertoken graph $F_2^2(C_4)$; $(d)$ The supertoken graph $F_{2\times 1}^1(C_4)\subset F_{2\times 1}^2(C_4)$; $(e)$ The supertoken graph $F_{2\times 1}^2(C_4)\cong C_4\Box C_4$. The tokens are white or gray, and the rhombuses in $(b)$--$(e)$ represent the vertices of the supertoken graphs.}
   \label{C_4+2-supertokens(C_4)}
  \end{figure}

The supertoken graphs have been widely applied in various fields involving computer science, physics, chemistry, and so on. 
For example, moving tokens along the edges of a given graph to reach a final configuration is a class of reconfiguration problems in computer science; see Bonnet, Miltzow, and Rz\k{a}\.{z}ewski \cite{BMR2018}.  Moreover, the colored token problem is attributed to
Yamanaka, Horiyama, Mark Keil, Kirkpatrick, Otachi, Saitoh, Uehara, and Uno
\cite{yhmkosuu18}. See also Yamanaka, Demaine, Ito, Kawahara, Kiyomi, Okamoto, Saitoh, Suzuki, Uchizawa, and Uno \cite{ydikkossuu15}.
In physical and chemical applications, a class of supertoken graph is related to the exchange Hamiltonians in quantum mechanics (see Audenaert, Godsil, Royle, and Rudolph \cite{agrr07}). Besides, the minimum cycle basis construction of a class of supertoken graphs may be used to confirm that state-dependent coupling of automata in such a way that it does not violate the principle of microscopic reversibility, see Hammack and Smith \cite{HaSm17}.

Let us begin by giving the different kinds of 
$k$-supertoken graphs of a graph $G$ (with $n$ vertices and $m$ edges),
indicating the section in which they are dealt with (see a scheme in Table~\ref{taula:casos-part}):

\begin{table}[t]
\begin{center}
\begin{tabular}{|c||c|c|c|c|c|}
  \hline
  $s\setminus$ {\rm tokens} & all equal & all different                                      \\[.1cm]
  \hline\hline
  $1$              & $F_k^1(G)=F_k(G)$       & $F_{k\times 1}^1(G)$                                   \\[.1cm]
                   & Token graph or symmetric power           &                                                           \\[.1cm]
  \hline
  $1<s<k$          & $F_k^s(G)$          & $F_{k\times 1}^s(G)$                                   \\[.1cm]
  \hline
  $k$              & $F_k^k(G)$          & $F_{k\times 1}^k(G)=G\Box\stackrel{(k)}{\cdots}\Box G$ \\[.1cm]
                   & Reduced power                            & Cartesian product                                          \\[.1cm]
  \hline
\end{tabular}
\caption{Particular cases of the $k$-supertoken graph. 
}
\label{taula:casos-part}
\end{center}
\end{table}

\begin{itemize}
\item[$\circ$] The $k$ tokens are indistinguishable, and the maximum number of tokens per vertex is one:
$F_k^1(G)=F_k(G)$ (Section~\ref{sec:token}).

\item[$\circ$] The $k$ tokens are indistinguishable, and the maximum number of tokens per vertex is $s$, with $1<s<k$:
$F_k^s(G)$ (Section~\ref{sec:F_k^s(G)}).

\item[$\circ$] The $k$ tokens are indistinguishable, and the maximum number of tokens per vertex is $k$:
$F_k^k(G)$ (Section~\ref{sec:F_k^k(G)}).

\item[$\circ$] The $k$ tokens are all different, and the maximum number of tokens per vertex is one:
$F^{1}_{k\times 1}(G)$ (Section~\ref{sec:k-colors-1tokenvertex}).

\item[$\circ$] The $k$ tokens are all different, and the maximum number of tokens per vertex is $s$, with $1<s<k$:
$F^{s}_{k\times 1}(G)$ (Section~\ref{sec:F_k^s(G)-colors-dif}).

\item[$\circ$] The $k$ tokens are all different, and the maximum number of tokens per vertex is $k$:
$F_{k\times 1}^k(G)=G\Box\stackrel{(k)}{\cdots}\Box G$, which is the Cartesian $k$-th product of $G$ (Section~\ref{sec:prod-cartesia}).
\end{itemize}

When the tokens are indistinguishable, we say that they have the same color. Otherwise, we consider that each token has a different color from the others.

\section{Generalized token graphs}

In this section, we deal with results that can be applied to all generalized token graphs. For a graph $G=(V,E)$, define an unordered $k$-tuple (or an ordered $k$-tuple) $(\alpha_1,\alpha_2,\ldots,\alpha_k)$ to be a vertex of the supertoken graph $F_k^s(G)$ (or $F^{s}_{k\times 1}(G)$, respectively) with $\alpha_{i}\in V$ satisfying the number of times a vertex of $G$ appears with a token in $\{\alpha_1,\alpha_2,\ldots,\alpha_k\}$ is less than $s+1$.

Observe that 
\begin{equation}\label{eq:subset1}
    F^1_k(G)\subseteq F_k^2(G)\subseteq \cdots \subseteq F_k^k(G)
\end{equation}
and 
\begin{equation}\label{eq:subset2}
    F^{1}_{k\times 1}(G)\subseteq F^{2}_{k\times 1}(G)\subseteq \cdots\subseteq F^{k}_{k\times 1}(G).
\end{equation}
Barik and Verma \cite{bv24} showed that if $G$ is a connected graph, then, for $2\le k\le \frac{n}{2}$, $F_k(G)$ cannot be a tree for $n\ge 4$ and contains at least two cycles for $n\ge5$. Then, we get the following proposition from (\ref{eq:subset1}) and (\ref{eq:subset2}).
\begin{proposition}
    Let $F_{k}^s(G)$ and $F^{s}_{k\times 1}(G)$ be two $k$-supertoken graphs of a connected graph $G$ with order $n$, where $1\le s\le k$. Then, the following statements hold.
    \begin{itemize}
        \item [$(i)$] For $2\le k\le \frac{n}{2}$, the $k$-supertoken graph $F_{k}^s(G)$ cannot be a tree for $n\ge4$;
        \item [$(ii)$] For $2\le k\le n-2$, the $k$-supertoken graph $F^{s}_{k\times 1}(G)$ cannot be a tree for $n\ge4$.
    \end{itemize}
\end{proposition}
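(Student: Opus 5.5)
The plan is to reduce both statements to the case $s=1$ by using the chains of inclusions (\ref{eq:subset1}) and (\ref{eq:subset2}). The first step is to check that $F^1_k(G)$ is a subgraph of $F^s_k(G)$ for every $s$, and likewise $F^1_{k\times 1}(G)$ of $F^s_{k\times 1}(G)$. Indeed, a configuration with at most one token per vertex is admissible for every $s\ge 1$, and the adjacency rule (moving one token along an edge of $G$) does not depend on $s$. Hence a cycle in the $s=1$ graph is also a cycle in the larger graph, which therefore cannot be a tree. So it suffices to exhibit a cycle when $s=1$.

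For $(i)$, I would invoke the result of Barik and Verma \cite{bv24}: for $G$ connected, $n\ge 4$ and $2\le k\le n/2$, the token graph $F_k(G)=F^1_k(G)$ is not a tree. Since $G$ is connected, $F_k(G)$ is connected (this is well known, since any configuration can be moved to any other along a spanning tree). A connected graph that is not a tree contains a cycle. By the inclusion above, that cycle lies in $F^s_k(G)$, which settles $(i)$.

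For $(ii)$, the result of Barik and Verma does not transfer directly, because a cycle of unlabeled configurations may lift to a walk that permutes the labeled tokens. I would therefore construct a cycle in $F^1_{k\times 1}(G)$ by hand, splitting into two cases according to whether $G$ contains two disjoint edges.

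\emph{Case 1: $G$ has two disjoint edges $ab$ and $cd$.} Put token $1$ on $a$, token $2$ on $c$, and the remaining $k-2$ tokens on distinct vertices outside $\{a,b,c,d\}$. This is possible because $k-2\le n-4$. The moves $1\colon a\to b$ and $2\colon c\to d$ commute, so they generate a $4$-cycle.

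\emph{Case 2: $G$ has no two disjoint edges.} A connected graph on $n\ge 4$ vertices with this property is the star $K_{1,n-1}$; the triangle is excluded because $n\ge 4$. Let $c$ be the center. Put tokens $1,2$ on leaves $x,y$ and the other tokens on further leaves, leaving at least one empty leaf $z$ (this uses $k\le n-2$) and $c$ empty. Now route the two tokens through $c$ successively: $1$ goes to $z$, then $2$ to $x$, then $1$ to $y$, then $2$ to $z$, then $1$ to $x$, then $2$ to $y$. This returns to the start after $12$ moves. The step I expect to need the most care is checking that the $12$ configurations visited are pairwise distinct, so that this closed walk is a genuine cycle rather than a walk that backtracks. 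The configurations should be distinct because the position pair of tokens $1,2$ runs through distinct ordered pairs at the leaf-only stages, and the intermediate states (with one token at $c$) are also distinguishable. Combining both cases with the inclusion (\ref{eq:subset2}) gives $(ii)$.
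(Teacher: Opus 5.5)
Your argument is correct. For $(i)$ it is the paper's argument, with one useful addition: you note that $F_k(G)$ is connected, so ``not a tree'' really means ``contains a cycle,'' and only then does the inclusion (\ref{eq:subset1}) carry the conclusion over to $F^s_k(G)$.

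For $(ii)$ you use the same two gadgets as the paper, but with the case split reversed.
\begin{itemize}
\item The paper first treats $\Delta(G)\ge 3$. It takes a $K_{1,3}$ at a vertex of maximum degree and writes out exactly your $12$-cycle: two labeled tokens rotate through the center while the other $k-2$ are parked off those four vertices.
\item It then treats $\Delta(G)\le 2$, where $G$ is a path or a cycle. There it defers to Propositions~\ref{prop:path-cycle} and~\ref{prop:cycle-cycle}, whose cycle is your commuting-moves $4$-cycle on two disjoint edges.
\end{itemize}
In your version the $4$-cycle handles every graph with two disjoint edges, and the $12$-cycle is needed only for the star $K_{1,n-1}$, which you isolate with an elementary observation about graphs without two disjoint edges. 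This avoids the path/cycle classification and the forward references. It also shows that $F^1_{k\times 1}(G)$ contains a $4$-cycle unless $G$ is a star. In addition, you check that the twelve configurations are distinct and that each move lands on an empty vertex, which the paper leaves unchecked.

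A side remark: your comment that Barik and Verma's result ``does not transfer directly'' holds only for naive lifting of a single cycle.
\begin{itemize}
\item Forgetting the labels is a graph covering map $F^1_{k\times 1}(G)\to F_k(G)$.
\item Every component of a finite cover of a connected graph that has a cycle again has a cycle, since $|E|-|V|$ scales by the number of sheets and so stays nonnegative.
\item Barik and Verma give a cycle in $F_k(G)$ for $2\le k\le n/2$, and $F_k(G)\cong F_{n-k}(G)$ extends this to $n/2<k\le n-2$.
\end{itemize}
Together these would give $(ii)$ for exactly $2\le k\le n-2$, with no explicit construction.
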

\begin{proof}
It suffices to prove that $F_{k}^s(G)$ and $F^{s}_{k\times 1}(G)$ contain at least one cycle. 
Since $F^{1}_{k}(G)=F_k(G)$ cannot be a tree for $n\ge4$, it follows from (\ref{eq:subset1}) that $F^s_k(G)$ cannot be a tree for $1\le s\le k$ and $2\le k\le \frac{n}{2}$. 
Next, consider $F^{1}_{k\times 1}(G)$ for $2\le k\le n-2$. Suppose first that the maximum degree $\Delta$ of $G$ is at least $3$. Let $u_0$ be a vertex with degree $\Delta(G)$, and $u_1,u_2,u_3\in N_G(u_0)$,  neighbors of $u_0$ in $G$. Let $\alpha_1,\ldots,\alpha_{k-2}\in V(G)\backslash\{u_0,u_1,u_2,u_3\}$. 
Note that 
\begin{align*}
    &(u_0,u_1,\alpha_1,\ldots,\alpha_{k-2}),~(u_2,u_1,\alpha_1,\ldots,\alpha_{k-2}),~(u_2,u_0,\alpha_1,\ldots,\alpha_{k-2}),~(u_2,u_3,\alpha_1,\ldots,\alpha_{k-2}),~\\
&(u_0,u_3,\alpha_1,\ldots,\alpha_{k-2}),~
(u_1,u_3,\alpha_1,\ldots,\alpha_{k-2}),~
(u_1,u_0,\alpha_1,\ldots,\alpha_{k-2}),~
(u_1,u_2,\alpha_1,\ldots,\alpha_{k-2}),~\\
&(u_0,u_2,\alpha_1,\ldots,\alpha_{k-2}),~
(u_3,u_2,\alpha_1,\ldots,\alpha_{k-2}),~
(u_3,u_0,\alpha_1,\ldots,\alpha_{k-2}),~
(u_3,u_1,\alpha_1,\ldots,\alpha_{k-2}),~\\
&(u_0,u_1,\alpha_1,\ldots,\alpha_{k-2}) 
\end{align*}
is a cycle in $F^{1}_{k\times 1}(G)$. 
Assume next that $\Delta(G)\le 2$. 
Then, $G$ must be a path $P_n$ or a cycle $C_n$. 
Moreover, $F^{1}_{k\times 1}(P_n)$ and $F^{1}_{k\times 1}(C_n)$ contain at least one cycle for $k\le n-2$ (the proofs are shown in Propositions \ref{prop:path-cycle} and \ref{prop:cycle-cycle}). 
It follows that $F^{1}_{k\times 1}(G)$ contains at least one cycle for $2\le k\le n-2$.
Together with (\ref{eq:subset2}), for any $1\le s\le k$, $F^{s}_{k\times 1}(G)$ contains at least one cycle.
\end{proof}

Denote by $d_G(u,v)$ the distance between $u$ and $v$ in $G$.
For a connected graph $G$, the following theorem gives a sufficient condition for all cases of supertoken graphs to be connected.
\begin{theorem}\label{th:connected-all}
    Let $G=(V,E)$ be a connected graph with maximum degree $\Delta(G)$. 
    For $1\le s\le k$, if $k<\Delta(G)$, then the supertoken graphs  $F_{k}^s(G)$ and $F^s_{k\times 1}(G)$ are connected. Moreover, for $\Delta(G) \le k\le n$, the supertoken graph $F_{k}^s(G)$ is connected, except for $F_{n}^1(G)$.
\end{theorem}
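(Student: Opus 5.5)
The plan is to reduce every configuration to a canonical one by explicit token moves. Since adjacency in a supertoken graph is symmetric, showing that every vertex can be joined to one fixed configuration proves connectivity. The argument has three steps: (1) a ``spreading'' step, which removes stacks of tokens and reduces to at most one token per vertex; (2) the connectivity of the ordinary token graph $F_k(G)=F^1_k(G)$ of a connected graph $G$; (3) for distinguishable tokens, a ``sorting'' step that realizes any permutation of the labels, using a vertex of maximum degree.

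\emph{Step 1 (spreading).} Take a configuration with $k\le n$ tokens in which some vertex $x_0$ carries at least two tokens. There is then an empty vertex. Choose a shortest path $x_0,x_1,\ldots,x_r$ from $x_0$ to the nearest empty vertex $x_r$, so that $x_1,\ldots,x_{r-1}$ are all occupied. Move one token from $x_{r-1}$ to $x_r$, then one from $x_{r-2}$ to $x_{r-1}$, and so on, finishing with one from $x_0$ to $x_1$. Each intermediate vertex loses a token before it receives one, so no load ever exceeds its original value, which is at most $s$. The vertex $x_r$ receives only one token. Hence every intermediate configuration is a vertex of $F^s_k(G)$ (or of $F^s_{k\times1}(G)$; the labels simply travel with the tokens). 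The move lowers $\sum_v \max(0,c(v)-1)$, where $c(v)$ is the number of tokens on $v$. Iterating, every configuration is joined to one with at most one token per vertex, that is, to a vertex of $F^1_k(G)$ or of $F^1_{k\times1}(G)$.

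\emph{Step 2 (indistinguishable tokens).} Step 1 reduces connectivity of $F^s_k(G)$, for all $k\le n$, to connectivity of $F_k(G)$. This is the classical fact that the token graph of a connected graph is connected. If needed, I would prove it by the same shortest-path shifting: to bring a token onto a target vertex, push along a path towards it. For $k<\Delta(G)$ this already settles the claim for $F^s_k(G)$.

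The case $k=n$, $s=1$ must be treated separately, and this is the point I expect to need the most care. Here $F^1_n(G)$ consists of the single full configuration, and Step 1 is unavailable because it needs an empty vertex. I will just record this case as the stated exception, matching the statement. For $s\ge2$ and $k=n$, Step 1 still applies from any configuration with a stacked vertex, since such a configuration has an empty vertex. The full configuration has no stacked vertex, but it is joined to one by a single move along any edge (legal because $s\ge2$). So $F^s_n(G)$ is connected for $s\ge 2$.

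\emph{Step 3 (distinguishable tokens, $k<\Delta(G)$).} Let $u_0$ have neighbors $v_1,\ldots,v_\Delta$. By Steps 1 and 2, moving the labelled tokens along the unlabelled moves, every configuration of $F^s_{k\times1}(G)$ is joined to one in which the tokens occupy $v_1,\ldots,v_k$ in some order. It then remains to show that all $k!$ orderings are joined to each other. Because $k<\Delta$, both $u_0$ and $w=v_{\Delta}$ are empty. A transposition of the tokens on $v_i$ and $v_j$ is realized by
\[
v_i\to u_0\to w,\qquad v_j\to u_0\to v_i,\qquad w\to u_0\to v_j,
\]
with at most one token per vertex throughout, so every intermediate configuration is feasible for every $s\ge1$. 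Transpositions generate the symmetric group, which gives connectivity of $F^s_{k\times1}(G)$.

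The condition $k<\Delta(G)$ is used exactly to guarantee the two free vertices $u_0$ and $w$. This is why the second part of the theorem is stated only for $F^s_k(G)$.
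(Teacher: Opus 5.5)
Your proof is correct. It follows the same overall plan as the paper: spread stacked tokens toward an empty vertex, invoke connectivity of $F_k(G)$, and permute labels using a maximum-degree vertex with two free spots. The execution differs in ways that make your version shorter and more uniform.

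\textbf{Indistinguishable tokens.} The paper argues by induction on $s$. It partitions $V(F_k^{s+1}(G))\setminus V(F_k^{s}(G))$ by the number of vertices carrying $s+1$ tokens. It then carries a \emph{single} token from the overfull vertex closest to an empty vertex $w$ all the way to $w$. That vertex is chosen by minimality so that intermediate loads may rise to $s+1$ but never beyond. Your chain-push to the nearest empty vertex never raises any load above its initial value. So you need neither the induction on $s$ nor the minimality choice. The same argument also applies verbatim to labelled tokens, where the paper only asserts that the inductive step is ``similar''.

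\textbf{Distinguishable tokens.} The paper proceeds in two parts.
\begin{enumerate}
\item It proves a Claim that the labelled token graph of the star $S_\Delta$ at a maximum-degree vertex is connected. This uses a two-case explicit construction of an arbitrary permutation $\sigma$.
\item By a separate induction on the number of tokens outside $V_S$, it routes every configuration into $V_S$ along explicit paths.
\end{enumerate}
You replace the second part by a single observation: once each occupied vertex carries one token, every path of $F_k(G)$ lifts to $F^1_{k\times 1}(G)$ from any labelling, so every configuration reaches the fixed support $\{v_1,\ldots,v_k\}$. You replace the first part by a three-move transposition through the hub $u_0$ with one spare leaf, plus the fact that transpositions generate the symmetric group.

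\textbf{Trade-off.} The paper's route yields explicit step-by-step paths, as in its Example. Yours is shorter, treats all $s$ and both token types uniformly, and makes transparent that $k<\Delta(G)$ is used only to secure the two free vertices $u_0$ and $w$.
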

\begin{proof}
    We first show that $F_{k}^s(G)$ is connected for $1\le s\le k\le n$, 
    except for $F_{n}^1(G)$.  
    The cases $k=1$ and $(k,s)=(n,1)$ are trivial.   
    We prove it by induction on $s$ with $1\le s\le k$. 
    For $s=1$, $F_k^1(G)$ is connected, since $G$ is connected (see, for instance, Dalf\'o, Duque, Fabila-Monroy, Fiol, Huemer, Trujillo-Negrete, and
    Zaragoza Mart\'{\i}nez \cite{ddffhtz2021}, and Barik and Verma \cite{bv24}). 
    Suppose that $F_k^s(G)$ is connected for $1\le s\le k-1$. We next show that $F_k^{s+1}(G)$ is also connected. 
    Since the $k$ tokens are indistinguishable in $F_k^{s}(G)$, the vertices in $V(F_k^{s+1}(G))\backslash V(F_k^s(G))$ can be partitioned into $V_1,V_2,\ldots,V_{\lfloor\frac{k}{s+1}\rfloor}$ such that  
    \begin{eqnarray*}
        V_i:=\left\{A_i\in V(F_k^{s+1}(G))\backslash V(F_k^s(G)):A_i=(\underbrace{u_1,\ldots,u_1}_{s+1},\ldots,\underbrace{u_i,\ldots,u_i}_{s+1},u_{i+1},\ldots,u_r)\right\},
    \end{eqnarray*}
    where the number of times a vertex of $G$ appears with a token in 
    $u_{i+1},\ldots,u_r$ is less than $s+1$.
    Let $V_0=V(F^s_k(G))$. 
    It suffices to prove that for any vertex $A_i\in V_i$ with $i=1,\ldots,\lfloor\frac{k}{s+1}\rfloor$,  there exists a path from $A_i$ to one vertex $A_{i-1}$ in $V_{i-1}$. 
    Since $s+1\ge 2$ and $k\le n$, there exists a vertex $w\in V(G)\backslash \{u_1,\ldots,u_r\}$. Moreover, there exists a path from one of $u_1,\ldots,u_{i}$ to $w$ in $G$, as $G$ is connected. 
    Without loss of generality, assume that $u_i$ is the vertex among $\{u_1,\ldots,u_i\}$ satisfying $d_G(u_i,w)=\min_{1\le j\le i}\{d_G(u_j,w)\}$. 
    Denote the path from $u_i$ to $w$ by $P=u_iw_1w_2\cdots w_pw$. It follows that $w_1,\ldots,w_p\notin \{u_1,\ldots,u_{i-1}\}.$
    Let $A_{i-1}\in V_{i-1}$ such that $A_i\triangle A_{i-1}=\{u_i,w\}$ and $B_1,B_2,\ldots,B_p\in V_{i}\cup V_{i-1}$ such that $A_i\triangle B_j=\{u_i,w_j\}$ for $j=1,2,\ldots,p$.
    Then, there exists a path $A_{i}B_1B_2\cdots B_pA_{i-1}$ in $F_k^{s+1}(G)$, as desired.

    Next, consider $F^s_{k\times 1}(G)$ for $1\le s\le k<\Delta(G)$. 
    We use induction on $s$. For $s=1$, we will prove that $F^1_{k\times 1}(G)$ is connected in the following claim. 
    For $1\le s\le k-1$, using a similar approach as above, we find that $F^{s+1}_{k\times 1}(G)$ is connected if $F^{s}_{k\times 1}(G)$ is connected. 
    Let us go back to the case where $s=1$. 
    Let $v_0\in V$ be the vertex with maximum degree $\Delta=\Delta(G)$, and $v_1,\ldots,v_{\Delta}$ be the neighbors of $v_0$. 
    Let $V_S=\{v_0,v_1,\ldots,v_{\Delta}\}$. Let $F^1_{k\times 1}(V_S)$ be the subgraph of $F^1_{k\times 1}(G)$ induced by the vertices in $V_S$.

    {\bf Claim:} $F^1_{k\times 1}(V_S)$ is connected for $k<\Delta(G)$.
    
    Let $S_{\Delta}$ (with $\Delta+1$ vertices) be a star with vertex set $V_S$ and $v_0$ be its central vertex, and $F^1_{k\times 1}(S_{\Delta})$ be the supertoken graph of $S_{\Delta}$.
    Note that $F^{1}_{k\times 1}(S_{\Delta})$ is a subgraph of $F^1_{k\times 1}(V_S)$. 
    It suffices to prove that $F^1_{k\times 1}(S_{\Delta})$ is connected. 
    Let $A'_1,A'_2,\ldots,A'_p$ be the vertices of $F^{1}_k(S_{\Delta})$ with $p=\binom{\Delta+1}{k}$,
    and $\AA_i$ be the set of vertices, where each vertex in $\AA_i$ corresponds to a distinct permutation of the tokens in the vertex $A'_i$ for $i=1,\ldots,p$, which implies that $|\AA_i|=k!$ for every $i=1,\ldots,p$. 
    Note that $V(F^{1}_{k\times 1}(S_{\Delta}))=\AA_1\cup\AA_2\cup\cdots\cup\AA_p$. 
    Let $(\alpha_1,\alpha_2,\ldots,$ $\alpha_k)$ be one vertex of $F^{1}_{k\times 1}(S_{\Delta})$.
    We show that there is a path between every pair of vertices in $\AA_i$, where $i=1,\ldots,p$, that is, there is a path from $(\alpha_1,\ldots,\alpha_k)$ to $(\sigma(\alpha_1),\ldots,\sigma(\alpha_k))$, where $\sigma$ is a permutation of $\{\alpha_1,\ldots,\alpha_k\}$.   

    {\bf Case 1.} $v_0\in \{\alpha_1,\ldots,\alpha_k\}$.

    Without loss of generality, suppose that $\alpha_1=v_0$. 
    Since $k\le \Delta-1$, there are at least two vertices $u,v\in V_S\backslash \{v_0,\alpha_2,\ldots,\alpha_k\}$.
    Let $\sigma(v_0)=\alpha_{j_1}$ with $1\le j_1\le k$. 
    Note that there exists $j_2$ with $2\le j_2\le k$ and $j_2\ne j_1$ such that $\sigma(\alpha_2)=\alpha_{j_2}$, which implies $\sigma(\alpha_2)\ne v_0$. 
    We have 
    \begin{eqnarray}
    (v_0,\alpha_2,\ldots,\alpha_{j_2},\ldots,\alpha_k)&\sim&(u,\alpha_2,\ldots,\alpha_{j_2},\ldots,\alpha_k)\notag\\
        &\sim&(u,\alpha_2,\ldots,v_0,\ldots,\alpha_k)\notag\\
        &\sim&(u,\alpha_2,\ldots,v,\ldots,\alpha_k)        \notag\\
        &\sim&(u,v_0,\ldots,v,\ldots,\alpha_k)\notag\\
        &\sim&(u,\alpha_{j_2},\ldots,v,\ldots,\alpha_k)\notag\\
         &\sim&(v_0,\alpha_{j_2},\ldots,v,\ldots,\alpha_k)=(v_0,\sigma(\alpha_2),\ldots,v,\ldots,\alpha_k).\label{eq:connected-star1}
    \end{eqnarray}
    There are still at least two vertices $u,\alpha_{2}\in V_S\backslash \{v_0,\sigma(\alpha_2),\ldots,v,\ldots,\alpha_k\}$. 
    Then, by a similar analysis as above, we get that there is a path from $(v_0,\sigma(\alpha_2),\ldots,v,$
    $\ldots,\alpha_k)$ to $(v_0,\sigma(\alpha_2),$
    $\ldots,\sigma(\alpha_{j_1-1}),v^*,\sigma(\alpha_{j_1+1}),\ldots,\sigma(\alpha_k))$, where $v^*\ne \alpha_{j_1}$. 
    Moreover, we obtain 
    \begin{eqnarray}
    &&\left(v_0,\sigma(\alpha_2),\ldots,\sigma(\alpha_{j_1-1}),v^*,\sigma(\alpha_{j_1+1}),\ldots,\sigma(\alpha_k)\right)\notag\\
    &\sim&\left(\alpha_{j_1},\sigma(\alpha_2),\ldots,\sigma(\alpha_{j_1-1}),v^*,\sigma(\alpha_{j_1+1}),\ldots,\sigma(\alpha_k)\right)\notag\\
    &\sim&\left(\alpha_{j_1},\sigma(\alpha_2),\ldots,\sigma(\alpha_{j_1-1}),v_0,\sigma(\alpha_{j_1+1}),\ldots,\sigma(\alpha_k)\right)\notag\\
    &\sim&\left(\alpha_{j_1},\sigma(\alpha_2),\ldots,\sigma(\alpha_{j_1-1}),\sigma(\alpha_{j_1}),\sigma(\alpha_{j_1+1}),\ldots,\sigma(\alpha_k)\right)\notag\\
    &=&\left(\sigma(v_0),\sigma(\alpha_2),\ldots,\sigma(\alpha_{j_1-1}),\sigma(\alpha_{j_1}),\sigma(\alpha_{j_1+1}),\ldots,\sigma(\alpha_k)\right).\label{eq:connected-star2}
    \end{eqnarray}

    {\bf Case 2.} $v_0\notin \{\alpha_1,\ldots,\alpha_k\}$.

    Let $\sigma$ be a permutation of $\{\alpha_1,\ldots,\alpha_k\}$ with $\sigma(\alpha_1)=\alpha_{j_1}$ and $\sigma(\alpha_{j_2})=\alpha_1$, and $\tau$ be a permutation of $\{v_0,\alpha_2,\ldots,\alpha_k\}$ such that $\tau(v_0)=\alpha_{j_1}$, $\tau(\alpha_{j_2})=v_0$, and $\tau(\alpha_i)=\sigma(\alpha_i)$ for $2\le i\le k$ and $i\ne j_2$.
    It follows from Case 1 that there is a path from $(v_0,\alpha_2,\ldots,\alpha_k)$ to $(\tau(v_0),\tau(\alpha_2),\ldots,\tau(\alpha_k))=(\alpha_{j_1},\sigma(\alpha_2),\ldots,v_0,\ldots,\sigma(\alpha_k))$. 
    Note that 
    \begin{equation}\label{eq:connected-star3}
        (v_0,\alpha_2,\ldots,\alpha_k)\sim (\alpha_1,\alpha_2,\ldots,\alpha_k)
    \end{equation}
    and
    \begin{eqnarray}
        (\alpha_{j_1},\sigma(\alpha_2),\ldots,v_0,\ldots,\sigma(\alpha_k))&\sim&(\alpha_{j_1},\sigma(\alpha_2),\ldots,\alpha_1,\ldots,\sigma(\alpha_k))\notag\\
        &=&(\sigma(\alpha_1),\sigma(\alpha_2),\ldots,\sigma(\alpha_{j_2}),\ldots,\sigma(\alpha_k))\label{eq:connected-star4}.
    \end{eqnarray}
    Thus, there is a path between every pair of vertices in $\AA_i$ for $i=1,\ldots,p$.
    
    As shown before, we get that $F^{1}_k(S_{\Delta})$ is connected for $1\le s\le k$. 
    This implies that there is a path from $A'_i$ to $A'_j$ in $F^{1}_k(S_{\Delta})$ for every $i,j=1,\ldots,p$ and $i\ne j$.
    Thus, for any vertex $B'_i\in \AA_i$ and $B'_j\in \AA_j$ with $i,j=1,\ldots,p$ and $i\ne j$, we have that there 
    exists a path from $B'_i$ to $B'_j$ in $F^{1}_{k\times 1}(S_{\Delta})$, that is, it is connected. 
    This completes the proof of the claim. 

    Since $F^{1}_{k\times 1}(V_S)$ 
    is a subgraph of $F^{1}_{k\times 1}(G)$, it suffices to show that for any vertex $A^*\in V(F^{1}_{k\times 1}(G))\backslash V(F^{1}_{k\times 1}(V_S))$, there is a path from $A^*$ to one of the vertices in $F^{1}_{k\times 1}(V_S)$.
    Consider the vertex $(\alpha_1^*,\alpha^*_2,\ldots,\alpha^*_k)$ in $V(F^{1}_{k\times 1}(G))$.
    Let \begin{eqnarray*}
        V^*_0:=\left\{(\alpha^*_1,\alpha^*_2,\ldots,\alpha^*_k)\in V(F^{1}_{k\times 1}(G)):\alpha^*_1,\ldots,\alpha^*_k\in V_S\right\},
    \end{eqnarray*}    
    and
    \begin{eqnarray*}
        V^*_i:=\left\{(\alpha^*_1,\alpha^*_2,\ldots,\alpha^*_k)\in V(F^{1}_{k\times 1}(G)):
        \alpha^*_{j_1},\ldots,\alpha^*_{j_i}\in V\backslash  V_S~\mbox{and}~\alpha^*_{j_{i+1}},\ldots,\alpha^*_{j_k}\in V_S\right\},
    \end{eqnarray*}
    for every $i=1,2,\ldots,k$.
    Note that $V^*_0=V(F^{1}_{k\times 1}(V_S))$ and $V(F^{1}_{k\times 1}(G))\backslash V(F^{1}_{k\times 1}(V_S))$
    $=V^*_1\cup V^*_2\cup \cdots\cup V^*_k$. 
    Thus, it suffices to prove that, for any vertex $A^*_i\in V^*_i$ with $i=1,\ldots,k$, there exists a path from $A^*_i$ to a vertex $A^*_0$ in $V^*_0$. 
    We use induction on $i$, where $1\le i\le k$. 
    For $i=1$, without loss of generality, suppose that $A^*_1=(\alpha^*_1,\alpha^*_2,\ldots,\alpha^*_k)\in V^*_1$ such that $\alpha^*_1\in V\backslash V_S$ and $\alpha^*_2,\ldots,\alpha^*_k\in V_S$. 
    Then, there exists a path from $\alpha^*_1$ to a vertex of $\{\alpha^*_2,\ldots,\alpha^*_k\}$~(since $G$ is connected). 
    Without loss of generality, assume that 
    $\alpha^*_t$ is the vertex among $\{\alpha^*_2,\ldots,\alpha^*_k\}$ satisfying $d_G(\alpha^*_1,\alpha^*_t)=\min_{2\le j\le k}d_G(\alpha^*_1,\alpha^*_j)$. 
    Denote the path from $\alpha^*_1$ to $\alpha^*_t$ by $P^*=\alpha^*_1w^*_1w^*_2\cdots w^*_p\alpha^*_t$. 
    If there exists a vertex $w^*_j$ such that $w^*_j\in V_S$, then we have 
    \begin{eqnarray*}
        A^*_1&=&(\alpha^*_1,\alpha^*_2,\ldots,\alpha^*_k)\notag\\
        &\sim&(w^*_1,\alpha^*_2,\ldots,\alpha^*_k)~(\mbox{token}~1~\mbox{moved from}~\alpha^*_1~\mbox{to}~w^*_1)\notag\\
        &&\cdots\notag\\
        &\sim&(w^*_j,\alpha^*_2,\ldots,\alpha^*_k)=A^*_0.\label{eq:connectedgraph1}
    \end{eqnarray*}    
    Consider next that $w^*_1,\ldots,w^*_p\notin V_S$.
    It implies that $\alpha^*_t\ne v_0$.
    For $v_0\notin \{\alpha^*_2,\ldots,\alpha^*_k\}$, we obtain 
     \begin{eqnarray*}
        A^*_1&=&(\alpha^*_1,\ldots,\alpha^*_t,\ldots,\alpha^*_k)\notag\\
         &\sim&(\alpha^*_1,\ldots,v_0,\ldots,\alpha^*_k)~(\mbox{token}~t~\mbox{moved from}~\alpha^*_t~\mbox{to}~v_0)\notag\\
        &\sim&(w^*_1,\ldots,v_0,\ldots,\alpha^*_k)\notag\\
        &&\cdots\notag\\
        &\sim&(w^*_p,\ldots,v_0,\ldots,\alpha^*_k)\notag\\
        &\sim&(\alpha^*_t,\ldots,v_0,\ldots,\alpha^*_k)=A^*_0.\label{eq:connectedgraph2}
    \end{eqnarray*}
    For $v_0\in \{\alpha^*_2,\ldots,\alpha^*_k\}$, suppose that $\alpha^*_r=v_0$ with $r\ne t$. 
    Thus, 
    \begin{eqnarray*}
        A^*_1&=&(\alpha^*_1,\ldots,\alpha^*_t,\ldots,\alpha^*_r,\ldots,\alpha^*_k)\notag\\
         &\sim&(\alpha^*_1,\ldots,\alpha^*_t,\ldots,u,\ldots,\alpha^*_k)~(\mbox{token}~r~\mbox{moved from}~\alpha^*_r=v_0~\mbox{to}~u\in V_S\backslash\{\alpha^*_2,\ldots,\alpha^*_k\})\notag\\
        &\sim&(\alpha^*_1,\ldots,v_0,\ldots,u,\ldots,\alpha^*_k)~(\mbox{token}~t~\mbox{moved from}~\alpha_t~\mbox{to}~v_0)\notag\\
        &\sim&(w^*_1,\ldots,v_0,\ldots,u,\ldots,\alpha_k)\notag\\
        &&\cdots\notag\\
        &\sim&(w^*_p,\ldots,v_0,\ldots,u,\ldots,\alpha_k)\notag\\
        &\sim&(\alpha^*_t,\ldots,v_0,\ldots,u,\ldots,\alpha^*_k)=A^*_0.\label{eq:connectedgraph3}
    \end{eqnarray*}
    Suppose next that for any vertex $A^*_i\in V^*_i$, there exists a path from $A^*_i$ to a vertex $A^*_0$ in $V^*_0$, where $1\le i\le k-1$. 
    Without loss of generality, assume that $A^*_{i+1}=(\alpha^*_1,\ldots,\alpha^*_{i+1},\alpha^*_{i+2},\ldots,\alpha^*_k)\in V^*_{i+1}$ such that $\alpha^*_1,\ldots,\alpha^*_{i+1}\in V\backslash V_S$ and $\alpha^*_{i+2},\ldots,\alpha^*_k\in V_S$. 
    Using a similar approach as above, we find that there exists a path from a vertex $A^*_{i+1}$ to a vertex $A^*_i\in V^*_i$. 
    Together with the hypothesis, there is a path from $A^*_{i+1}$ to a vertex $A^*_0$ in $V^*_0$, as desired.
\end{proof}

\begin{example}
Consider the graph $G$ shown in Figure \ref{fig:example-connected} and its supertoken graphs $F^{s}_{k}(G)$ and $F^{s}_{k\times 1}(G)$ with $k=4<5=\Delta(G)$ and $s=2$.

Note that $F_4^1(G)$ is connected.
Let $V_0=V(F_4^1(G))$. 
We get that the vertices in $V(F_4^2(G))\backslash V(F_4^1(G))$ can be partitioned into $V_1,V_2$ such that 
\begin{equation*}
        V_1:= \left\{(u_1,u_2,u_3,u_4):u_{j_1}=u_{j_2},~\mbox{and}~u_{j_1},u_{j_3},u_{j_4}~\mbox{are distinct}\right\}
    \end{equation*}
    and
    \begin{equation*}
        V_2:= \left\{(u_1,u_2,
        u_3,u_4):u_{j_1}=u_{j_2}\ne u_{j_3}=u_{j_4}\right\}.
    \end{equation*}
    We show that there is a path from any vertex $A_i\in V_i$ to one vertex $A_{i-1}\in V_{i-1}$ in $F_4^2(G)$.
    First, for $i=1$, consider a vertex $A_1=(6,6,2,0)\in V_1$. 
    We see that $1\notin\{6,2,0\}$ and the path from $6$ to $1$ is $P=6201$. 
    Let $B_1=(6,2,2,0)$, 
    $B_2=(6,0,2,0)$ and $A_0=(6,1,2,0)\in V_0$, so that $A_1\triangle B_1=\{6,2\}$, $B_1\triangle B_2=\{2,0\}$, and $B_2\triangle A_0=\{0,1\}$. 
    Then, the path is $A_1B_1B_2A_0$.
    Next, for $i=2$, consider a vertex $A_2=(6,6,2,2)\in V_2$. 
    Recall that $0\sim2$. 
    Then, the path is $A_2A_1$.
    Hence, we obtain a path from a vertex $A_i\in V_i$ (for $i=1,2$) to a vertex $A_{i-1}\in V_0$.

Next, consider $F^{2}_{4\times 1}(G)$. 
Using a similar analysis as above, we just show that $F^{1}_{4\times 1}(G)$ is connected. 
Let $S_5$ be a star with vertex set $\{0,1,2,3,4,5\}$ and $0$ be its central vertex (we define $S_n$ as a graph with $n+1$ vertices). 
Recall that $F^{1}_{4\times 1}(S_5)$ is a subgraph of $F^{1}_{4\times 1}(G)$. 
We first show that $F^{1}_{4\times 1}(S_5)$ is connected.
In this case, let $A'_1,A'_2,\ldots,A'_{15}$ be the vertices of $F^{1}_4(S_5)$ (with indistinguishable tokens), and $\AA_i$ be the set of vertices such that each vertex in $\AA_i$ corresponds to a distinct permutation of the tokens in the vertex $A'_i$ for $i=1,\ldots,15$.
So, the vertices of $F^{1}_{4\times 1}(S_5)$ are $\bigcup_{i=1}^{15} \AA_i$.
Now, let $B'_1=(1,2,3,4)\in \AA_1$ be a vertex in $F^{1}_{4\times 1}(S_5)$.
We want to show there is a path from vertex $B'_1$ to vertex $C'_1$ in $\AA_1$.
    Suppose that $\sigma$ is a permutation of $\{1,2,3,4\}$ such that $\sigma(1)=3,\sigma(2)=1,\sigma(3)=4,\sigma(4)=2$, that is, $C'_1=(\sigma(1),\sigma(2),\sigma(3),\sigma(4))=(3,1,4,2)$.
    Let $\tau$ be a permutation of $\{0,2,3,4\}$ such that $\tau(0)=3,\tau(2)=0,\tau(3)=\sigma(3)=4,\tau(4)=\sigma(4)=2$. 
    As shown in (\ref{eq:connected-star1}), we have 
    \begin{equation*}\label{eq:star-exmp1}
    (0,2,3,4)\sim(5,2,3,4)\sim(5,2,3,0)\sim(5,2,3,1)\sim(5,2,0,1)\sim(5,2,4,1)\sim(0,2,4,1).
    \end{equation*}
    Similarly, we get 
    \begin{equation*}\label{eq:star-exmp2}
        (0,2,4,1)\sim(3,2,4,1)\sim(3,0,4,1)\sim(3,5,4,1)\sim (3,5,4,0)\sim(3,5,4,2)\sim(0,5,4,2).
    \end{equation*}
    As shown in (\ref{eq:connected-star2}), we obtain
    \begin{equation*}\label{eq:star-exmp3}
        (0,5,4,2)\sim(3,5,4,2)\sim(3,0,4,2).
    \end{equation*}
    Then, there exists a path from $(0,2,3,4)$ to $(\tau(0),\tau(2),\tau(3),$
    $\tau(4))=(3,0,4,2)$. 
    Moreover, from (\ref{eq:connected-star3}) and (\ref{eq:connected-star4}), we have $(0,2,3,4)\sim (1,2,3,4)$ and $(3,0,4,2)\sim(3,1,4,2)$. 
    Thus, there exists a path from $B'_1=(1,2,3,4)$ to $C'_1=(3,1,4,2)$.

    Then, we show that, for any vertex $A^*\in V(F^{1}_{4\times 1}(G))\backslash V(F^{1}_{4\times 1}(S_5))$, there is a path from $A^*$ to one of the vertices in $V(F^{1}_{4\times 1}(S_5))$.
    In this case, let
     \begin{eqnarray*}
        V^*_0:=\left\{(\alpha_1,\alpha_2,\alpha_3,\alpha_4)\in V(F^{1}_{4\times 1}(G)):\alpha_1,\alpha_2,\alpha_3,\alpha_4\in \{0,1,2,3,4,5\}\right\},
    \end{eqnarray*}    
    and
    \begin{align*}
        V^*_i:=\Big\{(\alpha_1,\alpha_2,\alpha_3,\alpha_4)\in V(F^{1}_{4\times 1}(G)):
        &\alpha_{j_1},\ldots,\alpha_{j_i}\in \{6,7,8,9\}~\mbox{and}\\ &\alpha_{j_{i+1}},\ldots,\alpha_{j_4}\in \{0,1,2,3,4,5\}\Big\},
    \end{align*}
    for every $i=1,2,3,4$.
    We want to show that there is a path from vertex $A^*_i\in V^*_i$ to a vertex in $V^*_0$.
    First, let $A^*_1=(7,1,0,4)\in V^*_1$ with $d_G(7,0)<d_G(7,1)=d_G(7,4)$ and $P^*_1=7620$. 
    Observe that $2\in V(S_5)$.
    It follows that 
    \begin{equation*}
        A^*_1=(7,1,0,4)\sim(6,1,0,4)\sim(2,1,0,4)\in V^*_0.
    \end{equation*}
    Suppose that $A^*_2=(7,6,0,4)\in V^*_2$.
    Note that $d_G(6,0)<d_G(7,0)$. 
    It follows that 
    \begin{equation}\label{eq:a2}
        A^*_2=(7,6,0,4)\sim(7,2,0,4)\sim(7,2,3,4)\sim(7,0,3,4)\sim(6,0,3,4)\sim(2,0,3,4)\in V^*_0.
    \end{equation}
    Next, let $A^*_3=(7,6,8,4)\in V_3$.
    Recall that $6\sim 2$ and $8\sim 3$. 
    Together with (\ref{eq:a2}), we have 
    \begin{equation*}
        A^*_3=(7,6,8,4)\sim(7,6,3,4)\sim(7,2,3,4)\sim(7,0,3,4)\sim(6,0,3,4)\sim(2,0,3,4)\in V^*_0.
    \end{equation*}
    Now, assume that $A^*_4=(7,6,8,9)$.
    It follows that 
    \begin{eqnarray*}
       A^*_4&=&(7,6,8,9)\sim(7,6,3,9)\sim (7,6,0,9)\sim(7,6,0,8)\sim(7,6,0,3)\sim(7,6,1,3)\\
       &\sim&(7,2,1,3)\sim(7,0,1,3)\sim(6,0,1,3)\sim(2,0,1,3)\in V^*_0. 
    \end{eqnarray*}
    So, we obtain a path from a vertex $A^*_i\in V^*_i$ (for $i=1, 2, 3,4$) to a vertex $A^*_0\in V_0=V(F^{1}_{4\times 1}(S_5))$.

    \begin{figure}   
    \begin{center}
    \begin{tikzpicture}[scale=1.1,auto,swap]
    \vertex (0) at (0,0) [label=above:{{\footnotesize$0$}}]{};
    \vertex (2) at (1,0.6) [label=above:{{\footnotesize $2$}}]{};
    \vertex (5) at (1,-1.2) [label=above:{{\footnotesize $5$}}]{};
    \vertex (9) at (2,0.6) [label=above:{{\footnotesize $6$}}]{};
    \vertex (6) at (3,0.6) [label=above:{{\footnotesize $7$}}]{};
    \vertex (3) at (1,0) [label=above:{{\footnotesize $3$}}]{};
    \vertex (7) at (2,0) [label=above:{{\footnotesize $8$}}]{};
    \vertex (8) at (3,0) [label=above:{{\footnotesize $9$}}]{};
    \vertex (1) at (1,1.2) [label=above:{{\footnotesize $1$}}]{};
    \vertex (4) at (1,-0.6) [label=above:{{\footnotesize $4$}}]{};
    \draw [line width=0.6pt] (0)--(1);
    \draw [line width=0.6pt] (0)--(2);
    \draw [line width=0.6pt] (0)--(5);
    \draw [line width=0.6pt] (0)--(3);
    \draw [line width=0.6pt] (0)--(4);
    \draw [line width=0.6pt] (2)--(9);
    \draw [line width=0.6pt] (6)--(9);
    \draw [line width=0.6pt] (3)--(7);
    \draw [line width=0.6pt] (8)--(7);
    \end{tikzpicture}
    \caption{A connected graph $G$ with maximum degree $\Delta(G)=5$. 
    }
     \label{fig:example-connected}
    \end{center}
\end{figure}
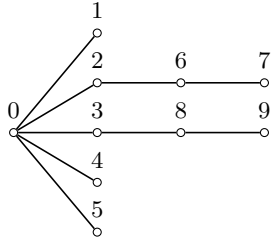
\end{example}

\begin{proposition}\label{prop:bipartite-star}
    For $1\le s\le k \le n$, the supertoken graph $F_k^s(S_n)$ and $F^s_{k\times 1}(S_n)$ of a star $S_n$ (with $n+1$ vertices) are bipartite.
\end{proposition}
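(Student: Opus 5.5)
The plan is to give an explicit $2$-coloring of the vertices of the supertoken graphs and then check that every edge joins vertices of different colors. The star $S_n$ is bipartite, with parts $\{v_0\}$ (the center) and $L=\{v_1,\ldots,v_n\}$ (the leaves). For a vertex $A$ of $F_k^s(S_n)$, viewed as a multiset of $k$ elements of $V(S_n)$ with no element repeated more than $s$ times, or of $F^s_{k\times 1}(S_n)$, viewed as an ordered $k$-tuple $(\alpha_1,\ldots,\alpha_k)$, define $c(A)$ as the number of tokens placed on the center $v_0$. In the multiset case this is the multiplicity of $v_0$ in $A$; in the ordered case it is $|\{i:\alpha_i=v_0\}|$. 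We then color $A$ by the parity of $c(A)$.

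The key step is to show that adjacent vertices have $c$-values of different parity. By definition, two vertices $A,B$ are adjacent exactly when $B$ is obtained from $A$ by moving a single token along an edge $uv$ of $S_n$. In the ordered case, one coordinate $\alpha_i=u$ is replaced by $v$; in the unordered case, $A\triangle B=\{u,v\}$ as multisets. Every edge of $S_n$ has the form $v_0v_j$ with $v_j\in L$. Hence such a move either takes a token from $v_0$ to a leaf, so $c(B)=c(A)-1$, or from a leaf to $v_0$, so $c(B)=c(A)+1$. All other tokens stay put, so $|c(A)-c(B)|=1$.

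It follows that the sets $\{A: c(A)\ \text{even}\}$ and $\{A:c(A)\ \text{odd}\}$ are both independent, which is exactly a bipartition. Neither the capacity bound $s$ nor the hypothesis $k\le n$ plays any role beyond guaranteeing that the graphs are well defined. The argument is the same for $F_k^s(S_n)$ and $F^s_{k\times 1}(S_n)$. Note also that bipartiteness does not require connectivity, so a possibly empty color class or a disconnected graph causes no trouble.

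The only step needing care is the unordered case with repeated elements, where the symmetric difference must be read as a multiset symmetric difference. One must confirm that adjacency still means that exactly one copy of $u$ is replaced by one copy of $v$, so that the multiplicity of $v_0$ changes by exactly one. This is precisely the paper's description of moving one token to a feasible adjacent vertex, so no real obstacle arises.
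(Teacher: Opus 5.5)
Your proof is correct and is essentially the paper's argument. The paper partitions the vertices into classes $V_i$ according to the number $i$ of tokens on the center and takes the unions of the odd-indexed and even-indexed classes as the two sides, which is exactly your parity coloring by $c(A)$; your remark about reading adjacency as a multiset symmetric difference fills in a detail the paper leaves implicit.
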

\begin{proof}
    Let $V(S_n)=\{0,1,\ldots,n\}$.
    Note that the vertices in $V(F_k^s(S_n))$ (or $V(F^s_{k\times 1}(S_n))$) can be partitioned into $V_0,V_1,\ldots,V_{\lfloor\frac{k}{s}\rfloor}$ such that $$V_i:=\left\{A=\right(\alpha_1,\alpha_2,\ldots,\alpha_k):\alpha_{j_1}=\cdots=\alpha_{j_i}=0~\mbox{for}~j_1,\ldots,j_{i}\in [k]\}.$$
    Let $X$ and $Y$ be the unions of all $V_i$ with odd and even indices $i$, respectively. 
    Then no edges join pairs of vertices belonging to $X$, and the same is true in $Y$.
\end{proof}

\section{The token graphs $F_k^1(G)=F_k(G)$}
\label{sec:token}
In this case, the $k$ tokens are indistinguishable, and the maximum number of tokens per vertex is one.
The token graph $F^{1}_{k}(G)=F_k^1(G)$
has order $\binom{n}{k}$ and size $\binom{n-2}{k-1} m$. 
$F_k(G)$ is the known {\em $k$-th symmetric power} of $G$ (see Audenaert, Godsil, Royle, and Rudolph~\cite{agrr07}), later renamed {\em $k$-token graph} of $G$ by Fabila-Monroy, Flores-Pe\~{n}aloza, Huemer, Hurtado, Urrutia, and Wood~\cite{ffhhuw12}. 
See an example in Figure~\ref{C_4+2-supertokens(C_4)}$(b)$ for the case $G=C_4$ ($n=4$) and $k=2$, where $F_2^1(C_4)\cong K_{2,4}$.

In particular, when $G$ is the complete graph $K_n$, the token graph $F_k^1(K_n)$ is the distance-regular graph known as the {\em Johnson graph} $J(n,k)$, which is closely related to some issues of coding theory.
See, for instance, Godsil \cite{Go93}.
\subsection{Isomorphism between the token and supertoken graphs of paths}

When $G=P_n$, the path graph on $n$ vertices, we have the following result.

\begin{lemma}
For every $k<n$, the following isomorphism holds
$$
F_k^1(P_n)\cong F_{n-k}^{n-k}(P_{k+1}).
$$
\end{lemma}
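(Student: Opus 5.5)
We construct an explicit bijection using the classical "stars and bars" correspondence between $k$-subsets of $\{1,\ldots,n\}$ and weak compositions of $n-k$ into $k+1$ parts, and then check that it maps token moves to token moves. Label the vertices of $P_n$ as $1,2,\ldots,n$ and those of $P_{k+1}$ as $0,1,\ldots,k$. A vertex of $F_k^1(P_n)$ is a set $\{a_1<a_2<\cdots<a_k\}\subseteq\{1,\ldots,n\}$. Think of the $k$ occupied vertices as bars and the $n-k$ empty vertices of $P_n$ as indistinguishable tokens. Define $x_j$ as the number of empty vertices lying strictly between the $j$-th and $(j+1)$-st bar, with $a_0=0$ and $a_{k+1}=n+1$; that is, $x_j=a_{j+1}-a_j-1$ for $j=0,\ldots,k$. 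Then $x_j\ge 0$ and $\sum_j x_j=n-k$. The image vertex of $F_{n-k}^{n-k}(P_{k+1})$ is the multiset placing $x_j$ tokens on vertex $j$. Since the bound $s=n-k$ equals the total number of tokens, there is no capacity restriction, and the vertices of $F_{n-k}^{n-k}(P_{k+1})$ are exactly the weak compositions of $n-k$ into $k+1$ parts. The inverse map is $a_j=j+\sum_{i<j}x_i$, so we have a bijection. As a sanity check on orders, $\binom{n}{k}=\binom{(k+1)+(n-k)-1}{n-k}$.

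Next we verify adjacency in both directions. An edge of $F_k^1(P_n)$ moves one token from $a_j$ to $a_j\pm1$, where the target is unoccupied. Moving $a_j$ to $a_j+1$ is allowed exactly when $x_j\ge1$, and it changes $(x_{j-1},x_j)$ to $(x_{j-1}+1,x_j-1)$. This amounts to moving one token from vertex $j$ to vertex $j-1$ in $P_{k+1}$, which are adjacent vertices. Moving $a_j$ to $a_j-1$ is symmetric: it requires $x_{j-1}\ge1$ and moves a token from $j-1$ to $j$.

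Conversely, suppose we move a token along the edge $\{j-1,j\}$ of $P_{k+1}$, with $1\le j\le k$. This requires the source vertex to hold a token, and in the original path it corresponds to shifting bar $a_j$ one step into an empty neighbouring cell. The edge condition in the definition, namely that the symmetric difference of the multisets is a pair of adjacent vertices, matches exactly on both sides, so the map preserves edges and non-edges.

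The main obstacle is bookkeeping: the indexing must send each bar $a_j$ to the edge $\{j-1,j\}$ of $P_{k+1}$ consistently, including the boundary gaps $x_0$ and $x_k$. This is why $P_{k+1}$, and not $P_k$, appears. It also requires checking that no move in either graph lacks a counterpart, for instance moving $a_1$ to $a_1-1$ when $x_0\ge1$. Once the formula $x_j=a_{j+1}-a_j-1$ is fixed, these checks are immediate.
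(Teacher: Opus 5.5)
Your proof is correct and is essentially the paper's argument. The paper uses the same stars-and-bars bijection: its $\alpha_{j+1}$ is your gap $x_j$, and its formula $\beta'_i=\sum_{j\le i}\alpha_j+i$ is your inverse $a_j=j+\sum_{i<j}x_i$. The paper then only asserts that adjacencies correspond to moving one unit one step, whereas your explicit check fills in that step: shifting bar $a_j$ is exactly a token move across the edge $\{j-1,j\}$ of $P_{k+1}$, including the boundary gaps $x_0$ and $x_k$.
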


\begin{proof}
Let us show a one-to-one mapping between the corresponding vertex sets that is a graph isomorphism.
Every vector $(\alpha_1,\ldots,\alpha_{k+1})$ representing a vertex of $F_{n-k}^{n-k}(P_{k+1})$ is mapped to a vertex of $F_k^1(P_n)$ in the following way:
\begin{itemize}
\item[$\circ$]
If $i\neq k+1$, then $\alpha_i$ is replaced by $\alpha_i$ 0's and one 1.
\item[$\circ$]
If $i= k+1$, then $\alpha_{k+1}$ is replaced by $\alpha_i$ 0's.
\end{itemize}
For instance, when $k+1=n-k=6$ ($n=11$), we get the following maps (the maps are defined from the vertices of $F_{6}^{6}(P_{6})$ to the vertices of $F_{5}^1(P_{11})$):
\begin{align*}
(3,0,0,1,0,2) & \quad\mapsto\quad (0,0,0,1,1,1,0,1,1,0,0) \ = \ (4,5,6,8,9);\\
(1,1,1,1,1,1) & \quad\mapsto\quad (0,1,0,1,0,1,0,1,0,1,0) \ = \ (2,4,6,8,10).
\end{align*}
This gives a vector $(\beta_1,\ldots,\beta_n)$ with $n-k$ 0's and $k$ 1's,
which corresponds to a vertex of $\alpha_i$ 0's. 
(As shown in the example, the positions of the 1's indicate the vertices of $P_n$ having a token). 
Then, in terms of the $\alpha_i's$, a vertex of $F_k^1(P_n)$ is a vector of $k$ components $(\beta'_1,\ldots,\beta'_k)$ (representing the vertices of $P_n$ having a token) computed as
\begin{align*}
\beta'_1 &=\alpha_1+1,\\
\beta'_2 &=\alpha_1+\alpha_2+2,\\
 & \vdots \\
\beta'_k &=\sum_{i=1}^k \alpha_i+k.
\end{align*}
Conversely, in terms of the $(\beta'_i)'s$, the vector of $k+1$ components representing a vertex of $F_{n-k}^{n-k}(P_{k+1})$ is
\begin{align*}
\alpha_1 &=\beta'_1-1,\\
\alpha_2 &=\beta'_2-\beta'_1-1,\\
 & \vdots \\
\alpha_k &=\beta'_k-\beta'_{k-1}-1,\\
\alpha_{k+1} &=n-\beta'_k.
\end{align*}
Moreover, in both graphs, the adjacencies correspond to moving one unit one step backward or forward, so that the mappings are an isomorphism, as claimed.
\end{proof}

For instance, Figure~\ref{fig:F_2^1(P_7)+F_5^5(P_3)} shows the isomorphic graphs $F_2^1(P_7)$ and $F_5^5(P_3)$. For example, notice the vertex equivalences
\begin{align*}
(1,3,1) & \quad\mapsto\quad (0,1,0,0,0,1,0) =(2,6),\\
(0,5,0) & \quad\mapsto\quad (1,0,0,0,0,0,1) =(1,7).
\end{align*}

\begin{figure}[t]
\begin{center}
\includegraphics[width=14cm]{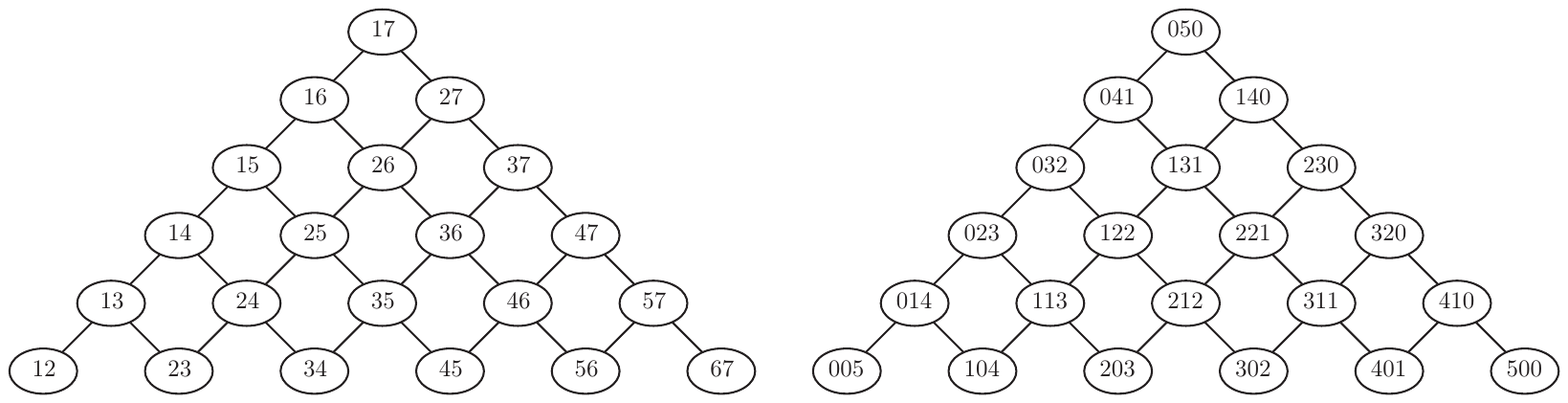}
\end{center}
\vskip-7.5cm
\caption{The graphs $F_2^1(P_7)$ (left) and $F_5^5(P_3)$ (right).}
\label{fig:F_2^1(P_7)+F_5^5(P_3)}
\end{figure}

More results on token graphs can be found in papers by Lea\~{n}os and Ndjatchi \cite{LN2021}, 
Lea\~{n}os and Trujillo-Negrete \cite{LT2018}, Dalf\'o, Duque, Fabila-Monroy, Fiol, Huemer,  Trujillo-Negrete, and Zaragoza Mart\'inez \cite{ddffhtz2021}, and Barik and Verma \cite{bv24}.

\section{The supertoken graphs $F_k^s(G)$}
\label{sec:F_k^s(G)}

In this case, the $k$ tokens are indistinguishable, and the maximum number of tokens per vertex is $s$, with $1<s< k$.
The supertoken graph $F^{s}_{k}(G)$ has order given by
the number of integer solutions $(x_1,\ldots,x_n)$, with $0\leq x_i \leq s$ for $i=1,\ldots,n$, of the equation $x_1+\cdots+x_n=k$. This
corresponds to the difference between the number of solutions with $x_i\geq0$ and the ones with $x_i\geq s+1$ for at least one $i$ from $\{1,\ldots,n\}$. 

\begin{proposition}
Let $G=(V,E)$ be a graph on $n$ vertices. The order and size of the supertoken graph $F_k^s(G)$ are, respectively,
\begin{equation}\label{nomial}
|V(F_k^s(G))| = \sum_{i=0}^{\lfloor k/(s+1)\rfloor} (-1)^i \binom{n}{i} \binom{n+k-1-i(s+1)}{n-1} := f(n,k,s),
\end{equation}
and 
\begin{equation}\label{eq:edge-fks}
|E(F_k^s(G))| = |E| \left(\sum_{i=1}^s i f(n-2,k-i,s) + 
\sum_{j=1}^{\omega-s+1} (s-j)f(n-2,k-s-j,s) \right),
\end{equation}
with $\omega=\min\{2s-2,k-1\}$. 
In (\ref{eq:edge-fks}), we define $f(n,0,s)=1$ when $j=k-s$.
\end{proposition}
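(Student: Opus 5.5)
The order is an inclusion--exclusion count, and the size is obtained by fixing an edge of $G$ and counting, for each total number of tokens on its two endpoints, the moves of one token across that edge.

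\textbf{Order.} A vertex of $F_k^s(G)$ is determined by its occupation vector $(x_w)_{w\in V}$, with $0\le x_w\le s$ and $\sum_w x_w=k$. So $|V(F_k^s(G))|$ is the number of integer solutions of $x_1+\cdots+x_n=k$ with $0\le x_i\le s$.

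1. By stars and bars, the number of solutions with only $x_i\ge 0$ is $\binom{n+k-1}{n-1}$.
2. For a set $I$ of $i$ indices forced to satisfy $x_j\ge s+1$, substitute $x_j'=x_j-(s+1)$ for $j\in I$. This gives $\binom{n+k-1-i(s+1)}{n-1}$ solutions, and such a set exists only when $i\le\lfloor k/(s+1)\rfloor$.
3. Inclusion--exclusion over the $\binom{n}{i}$ choices of $I$ then gives exactly the alternating sum $f(n,k,s)$ in (\ref{nomial}). We record that $f(n,0,s)=1$, since the only solution is the zero vector; this is needed below.

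\textbf{Size.} Every edge of $F_k^s(G)$ moves one token along a unique edge $uv\in E$ of $G$: the symmetric difference of the two multisets is $\{u,v\}$. Hence $|E(F_k^s(G))|=\sum_{uv\in E} N_{uv}$, where $N_{uv}$ counts the unordered pairs of configurations related by such a move. It suffices to show $N_{uv}$ equals the bracket in (\ref{eq:edge-fks}), which is independent of $uv$.

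1. Fix $uv$ and let $t=x_u+x_v$; this total is the same for both endpoints of such an edge. The other $n-2$ vertices carry $k-t$ tokens with at most $s$ each, giving $f(n-2,k-t,s)$ choices by the order formula.
2. Identify each edge with the configuration having $x_u=a$ before moving a token from $u$ to $v$. This is well defined and counts each unordered pair exactly once.
3. Both $(a,t-a)$ and $(a-1,t-a+1)$ must lie in $[0,s]^2$. This is equivalent to $\max(1,t-s+1)\le a\le\min(t,s)$, so there are $\min(t,s)-\max(1,t-s+1)+1$ choices of $a$.
4. For $1\le t\le s$ this count equals $t$. For $t=s+j$ with $1\le j\le s-1$ it equals $s-j$. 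For $t\ge 2s$ it is $0$, as it is for $t=0$.
5. Also $t\le k$ is required. So $j$ ranges over $1\le j\le\min(s-1,k-s)=\omega-s+1$, using $\omega=\min\{2s-2,k-1\}$.

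Summing over $t$ gives
$$N_{uv}=\sum_{i=1}^s i\,f(n-2,k-i,s)+\sum_{j=1}^{\omega-s+1}(s-j)\,f(n-2,k-s-j,s).$$
When $j=k-s$ the last term is $f(n-2,0,s)=1$, which is the stated convention. In the first sum $k-i\ge 1$ always, since $s<k$.

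\textbf{Main obstacle.} The delicate step is the bookkeeping at the boundaries. One must check that orienting each edge as a move from $u$ to $v$ neither double counts nor misses pairs. One must also check that the admissible range of $j$ matches $\omega-s+1$ in both regimes, $k\ge 2s-1$ and $k<2s-1$. The value $t=s$ is included in the first sum with count $s$, which agrees with the second formula $2s-t$ at $t=s$, so nothing is lost or duplicated there.
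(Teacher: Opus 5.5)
Your proof is correct and follows essentially the same route as the paper. The order comes from inclusion--exclusion over the coordinates forced to exceed $s$. For the size, you fix an edge $uv$ of $G$, split according to how many tokens sit on $\{u,v\}$, count the admissible splits ($t$, respectively $s-j$), and multiply by $f(n-2,\cdot,s)$ for the remaining vertices.

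The differences are only cosmetic. You index by the total $x_u+x_v$, whereas the paper counts the non-moving tokens on $\{u,v\}$, which is one less. You also make explicit the one-orientation-per-edge argument and the identity $\omega-s+1=\min\{s-1,k-s\}$, both of which the paper leaves implicit.
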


\begin{proof}
    Let $S$ be the set of non-negative integer solutions $(x_1,\ldots,x_n)$ of the equation $x_1+\cdots+x_n=k.$
    Then, $|S|=|CR^n_k|=\binom{n+k-1}{k}$. 
    Among $x_1,\ldots,x_n$, at most $l=\lfloor k/(s+1)\rfloor$ numbers have values greater than $s$. 
    For $i=1,\ldots,l$, let $P_i$ be the set of non-negative integer solutions $(x_1,\ldots,x_n)$ with $x_i\ge s+1$. Then, $$|P_i|=\binom{n+k-(s+1)-1}{k-(s+1)}=\binom{n+k-(s+1)-1}{n-1}.$$
    Moreover, we have $$|P_{i_1}\cap \cdots\cap P_{i_j}|=\binom{n+k-j(s+1)-1}{k-j(s+1)}=\binom{n+k-j(s+1)-1}{n-1}$$ for $j=1,\ldots,l.$
    Note that the order of the supertoken graph $F_k^s(G)$ is exactly the difference between the number of solutions with $x_i\ge 0$ and the ones with $x_i\ge s+1$ for at least one $i$, with $i=1,\ldots,n$.
    By the inclusion-exclusion principle, we find that
    \begin{eqnarray*}
        |V(F_k^s(G))|&=&|S|-|P_{i_1}\cup P_{i_2} \cup \cdots \cup P_{i_l}|\\
        &=&|S|-\sum|P_i|+\sum|P_{i_1}\cap P_{i_2}|+\cdots+(-1)^l\sum|P_{i_1}\cap P_{i_2} \cap \cdots \cap P_{i_l}|\\
        &=&\sum_{i=0}^{\lfloor k/(s+1)\rfloor} (-1)^i \binom{n}{i} \binom{n+k-1-i(s+1)}{n-1}.
    \end{eqnarray*}
    
    Consider one edge $uv\in E$. Then, the edges corresponding to $uv$ in $F_k^s(G)$ are 
    \begin{equation*}
        \left\{A_uA_v:A_u=(u,\alpha_2,\ldots,\alpha_k), A_v=(v,\alpha_2,\ldots,\alpha_k)~\mbox{with}~\alpha_i\in V~\mbox{for}~i=2,\ldots,k\right\}.
    \end{equation*}
    Suppose that $t$ is the number of elements equal to $u$ or $v$ in $\{\alpha_2,\ldots,\alpha_k\}$.
    Let $\omega=\min\{2s-2,k-1\}$.
    It follows that $0\le t\le \omega$ and the remaining elements of $\alpha_2,\ldots,\alpha_k$ except for $\{u,v\}$ are chosen from $V\backslash\{u,v\}$, and there are $f(n-2,k-t-1,s)$ possible combinations with repetition. 
    Let $t_u$ and $t_v$ be the numbers of elements equal to $u$ and $v$, respectively, with $t_u+t_v=t$.
    First, consider $0\le t\le s-1$.
    The solutions on $t_u+t_v=t$ are 
    \begin{equation*}
        t_u=0,t_v=t;\ t_u=1,t_v=t-1;\ \cdots ;\ t_u=t,t_v=0. 
    \end{equation*}
    Then, there are $t+1$ possible combinations with repetition.
    Hence, the number of possible combinations with repetition of $\{\alpha_2,\ldots,\alpha_k\}$ with $0\le t\le s-1$ is 
    \begin{equation*}
    \sum_{t=0}^{s-1}(t+1)f(n-2,k-t-1,s)=\sum_{i=1}^s i f(n-2,k-i,s).
    \end{equation*}
    Next, consider $s\le t\le \omega$. 
    The solutions on $t_u+t_v=t$ are 
    \begin{equation*}
        t_u=s-1,t_v=t-(s-1);\ t_u=s-2,t_v=t-(s-1)+1;\ \cdots; \ t_u=t-(s-1),t_v=s-1.
    \end{equation*}
    Then, there are $2s-1-t$ possible combinations with repetition.
    It follows that the number of possible combinations with repetition of $\{\alpha_2,\ldots,\alpha_k\}$ with $s\le t\le \omega$ is 
    \begin{equation}\label{eq:valuej}
    \sum_{t=s}^{\omega}(2s-1-t)f(n-2,k-t-1,s)=\sum_{j=1}^{\omega-s+1} (s-j)f(n-2,k-s-j,s).
    \end{equation}
    Note that $j\le \omega-s+1$ and $\omega\le k-1$.
    The variable $j$ in equation (\ref{eq:valuej}) satisfies $j\le k-s$.
    Finally, we get the number of possible combinations with repetition of ${\alpha_2,\ldots,\alpha_k}$ is 
    \begin{equation*}
    \sum_{i=1}^s i f(n-2,k-i,s) + \sum_{j=1}^{\omega-s+1} (s-j)f(n-2,k-s-j,s),
    \end{equation*}
    where $\omega=\min\{2s-2,k-1\}$.
    Therefore, the equation (\ref{eq:edge-fks}) holds since $G$ has size $|E|$.
    \end{proof}

    Note that since we cannot have more than $k$ tokens in a vertex, any situation where $s>k$ is treated the same as $s=k$.
    
     As an example (see Figure \ref{fig:F_3^12(C_4)}), the order of $F_3^2(C_4)$ 
     is $f(4,3,2)=16$ and the size of $F_3^2(C_4)$ according to \eqref{eq:edge-fks} is
     $$
     E(F^2_3(C_4))=4\left(\sum_{i=1}^2 i\,f(2,2,2)+f(2,0,2)\right)=32.
     $$

     As another example, $F_4^3(C_4)$ has 31 vertices and 72 edges, coinciding with formulas (8) and (9). 
From (8), the order of $F_4^3(C_4)$ is 
\begin{equation*}
    f(4,4,3)=\binom{4+4-1}{4-1}-\binom{4}{1}\binom{4+4-1-(3+1)}{4-1}=\binom{7}{3}-4\binom{3}{3}=35-4=31.
\end{equation*}
Note that $\omega=\min\{2s-2,k-1\}=3$.
From (9), the size of $F_4^3(C_4)$ is
\begin{eqnarray*}
    &&4\left(\sum_{i=1}^3 i\,f(2,4-i,3)+\sum_{j=1}^1(3-j)f(2,4-3-j,3)\right)\\
    &=&4\left(1f(2,3,3)+2f(2,2,3)+3f(2,1,3)+2f(2,0,3)\right)\\
    &=&4\left(1\binom{4}{1}+2\binom{3}{1}+3\binom{2}{1}+2\right)\\
    &=&4\cdot 18=72.
\end{eqnarray*}
The vertices of $F_4^3(C_4)$ and the edges in $F_4^3(C_4)$ corresponding to the edge $12$ in $C_4$ are listed in Tables \ref{table:F_4^3(C_4)vertices} and \ref{table:F_4^3(C_4)edges}. 

    \begin{table}[t]
\begin{center}
\begin{tabular}{|ccccccccc|}
  \hline
  $(1234)$&$(1123)$   &$(1124)$&$(1134)$&$(2213)$&$(2214)$&$(2234)$&$(3312)$&$(3314)$ \\
  $(3324)$&$(4412)$   &$(4413)$&$(4423)$&$(1122)$&$(1133)$&$(1144)$&$(2233)$&$(2244)$\\
  $(3344)$&$(1112)$   &$(1113)$&$(1114)$&$(2221)$&$(2223)$&$(2224)$&$(3331)$&$(3332)$ \\
  $(3334)$&$(4441)$   &$(4442)$&$(4443)$& & & & &\\
  \hline
\end{tabular}
\caption{Vertices in $F_4^3(C_4)$.}

\label{table:F_4^3(C_4)vertices}
\end{center}
\end{table}
    \begin{table}[t]
\begin{center}
\begin{tabular}{|c|c|c|c|c|c|}
  \hline
  $(1334)$&$(1344)$  &$(1333)$&$(1244)$&$(1112)$&$(1122)$\\
  $(2334)$&$(2344)$   &$(2333)$&$(2444)$&$(2112)$&$(2122)$\\
  \hline
$(1134)$&$(1133)$&$(1144)$&$(1234)$&$(1233)$&$(1244)$ \\
  $(2134)$&$(2133)$&$(2144)$&$(2234)$&$(2233)$&$(2244)$\\
  \hline
  $(1113)$&$(1123)$   &$(1223)$&$(1114)$&$(1134)$&$(1224)$ \\
  $(2113)$&$(2123)$   &$(2223)$&$(2114)$&$(2134)$&$(2224)$\\
  \hline
\end{tabular}
\caption{Edges in $F_4^3(C_4)$ corresponding to the edge $12$ in $C_4$.}
\label{table:F_4^3(C_4)edges}
\end{center}
\end{table}

\begin{figure}[t]
\begin{center}
\includegraphics[width=6cm]{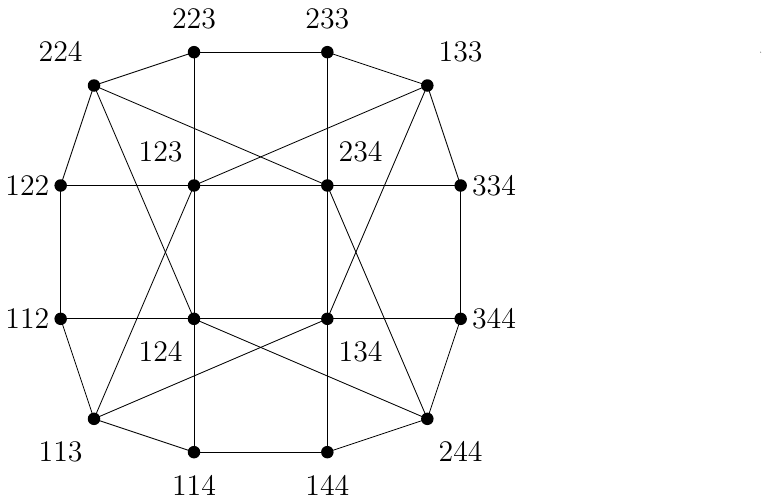}
\end{center}
\caption{The graph $F_3^2(C_4)$.}
\label{fig:F_3^12(C_4)}
\end{figure}
    
Alternatively, we note that, for $s\ge 0$, $f(n,k,s)$ in \eqref{nomial} is the $(s+1)$-nomial coefficient satisfying the recurrence
    $$
    f(n,k,s)=\sum_{i=0}^s f(n-1,k-i,s).
    $$ 
    For example, for $s=2$
    and $n=0,1,2,\ldots$, the trinomial coefficients turn out to be
    \begin{align*}
     &1\\
     &1,  1\\
     &1,2,3,2,1\\
     &1,3,6,7,6,3,1\\
     &1,4,10,16,19,16,10,4,1\\
     & \ldots
     \end{align*}

Let $G$ have vertices indexed by the integers $1,2,\ldots,n$.
Then, each vertex of the supertoken graph $F_{k}^s(G)$ 
can be represented by a vector $(\alpha_1,\ldots,\alpha_n)$, where $\alpha_i\in [0,s]$ is the number of tokens of the only color 
placed at the vertex $i\in [1,n]$.
In particular, the (classic) $k$-token graph $F_k(G)=F_k^1(G)$, with $0<k<n$, has vertices labeled by the binary $n$-vectors with $k$ 1's and $n-k$ 0's.
Then, the known isomorphism $F_k(G)\cong F_{n-k}(G)$ (see Fabila-Monroy, Flores-Pe\~{n}aloza, Huemer, Hurtado, Urrutia, and Wood~\cite{ffhhuw12}) is a consequence of the map $(\alpha_1,\ldots,\alpha_n)\mapsto (\overline{\alpha_1},\ldots,\overline{\alpha_n})$, where $\overline{0}=1$ and $\overline{1}=0$.

\section{The supertoken graphs $F_k^k(G)$}
\label{sec:F_k^k(G)}

Let $G$ be a graph of order $n$ and size $m$. 
In this case, the $k$ tokens are indistinguishable, and the maximum number of tokens per vertex is $k$.
The supertoken graph $F_k^k(G)$ has order $|V(F_k^k(G))|=CR^n_{k}=\binom{n+k-1}{k}$. Note that we can also get this value from \eqref{nomial} since this formula is also valid for $s=k$.

\begin{proposition}
  The supertoken graph $F_k^k(G)$ has size 
$$|E(F_k^k(G))|=m{\binom{n+k-2}{k-1}}.
$$
\end{proposition}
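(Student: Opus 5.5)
The plan is to count edges of $F_k^k(G)$ by attributing each one to the unique edge of $G$ along which a token moves, and then to count, for a fixed edge $uv\in E$, how many edges of $F_k^k(G)$ correspond to it. By definition, two vertices $A,B$ of $F_k^k(G)$ (multisets of size $k$ on $V$) are adjacent exactly when $A\triangle B=\{u,v\}$ with $uv\in E$. Equivalently, $A=C+\{u\}$ and $B=C+\{v\}$ for a common multiset $C=A\cap B$ of size $k-1$. Since $s=k$, there is no capacity constraint. Every multiset $C$ of size $k-1$ therefore yields valid vertices $C+\{u\}$ and $C+\{v\}$, because neither can exceed $k$ tokens at a vertex.

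The key steps are as follows.

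\textbf{Step 1.} Show that the map $\{A,B\}\mapsto (uv, C)$ is a bijection between $E(F_k^k(G))$ and $E\times C\!R^n_{k-1}$. The edge $uv$ is recovered from the symmetric difference. The multiset $C$ is recovered as $A-\{u\}$, where $u$ is the element with strictly larger multiplicity in $A$ than in $B$. This is well defined because the multiplicities of $u$ in $A$ and $B$ differ by exactly one.

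\textbf{Step 2.} Check the converse direction. Given $uv\in E$ and $C$ of size $k-1$, the vertices $C+\{u\}\neq C+\{v\}$ are adjacent, and distinct pairs $(uv,C)$ give distinct edges.

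\textbf{Step 3.} Conclude that
$$|E(F_k^k(G))|=m\,|C\!R^n_{k-1}|=m\binom{n+k-2}{k-1}.$$

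The only delicate point is Step 1, namely that the multiset $C$ is determined uniquely regardless of whether $u$ or $v$ already appears in $C$. This is exactly where the formula for general $s$, in (\ref{eq:edge-fks}), required case analysis on $t_u,t_v$. Here that analysis collapses, because all $t_u,t_v\ge 0$ with $t_u+t_v=t$ are allowed, and summing over $t$ simply reconstitutes all of $C\!R^n_{k-1}$. As a sanity check, I would verify the formula against the example $F_2^2(C_4)$ of Figure~\ref{C_4+2-supertokens(C_4)}$(c)$: it has $\binom{5}{2}=10$ vertices, and the formula predicts $4\binom{4}{1}=16$ edges.
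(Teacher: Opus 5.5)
Your proof is correct and takes the same approach as the paper. Both fix an edge $uv$ of $G$, note that since $s=k$ the remaining $k-1$ tokens form an unconstrained multiset, giving $\binom{n+k-2}{k-1}$ edges of $F_k^k(G)$ per edge of $G$, and multiply by $m$. Your explicit bijection (recovering $uv$ from $A\triangle B$ and $C$ as $A\cap B$) makes rigorous the correspondence that the paper's proof takes for granted.
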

\begin{proof}
    Consider one edge $uv\in E(G)$. 
    Then, the edges corresponding to $uv$ in $F_k^k(G)$ are 
    \begin{equation*}
        \left\{A_uA_v:A_u=(u,\alpha_2,\ldots,\alpha_k), A_v=(v,\alpha_2,\ldots,\alpha_k)~\mbox{with}~\alpha_i\in V(G)~\mbox{for}~i=2,\ldots,k\right\}.
    \end{equation*}
    Since the maximum number of tokens per vertex is $k$ in $F_k^k(G)$, each vertex in $\{\alpha_2,\ldots,\alpha_k\}$ has at most $k-1$ tokens.
    Hence, the number of possible combinations with repetition of $\{\alpha_2,\ldots,\alpha_k\}$ is $\binom{n+k-2}{k-1}$. 
    The result holds as $G$ has size $m$.
\end{proof}

See an example in Figure~\ref{C_4+2-supertokens(C_4)}$(c)$ for the case $G=C_4$ ($n=4$) and $k=2$. This kind of supertoken graph was introduced by Hammack and Smith~\cite{HaSm17}, who named them \emph{reduced power graphs}.


If $G=K_n$,
then the vertices of the supertoken graph $F_k^k(K_n)$ represent the different possible states of a multiprocessor with $n$ memory modules and $k$ (indistinguishable) processors.

\section{The supertoken graphs $F^{1}_{k\times 1}(G)$}
\label{sec:k-colors-1tokenvertex}

In this case, the $k$ tokens are different, and the maximum number of tokens per vertex is one.
The supertoken graph $F^{1}_{k\times 1}(G)$ has order $\frac{n!}{(n-k)!}$ and size $mk\frac{(n-2)!}{(n-k-1)!}$.
See an example in Figure~\ref{C_4+2-supertokens(C_4)}$(d)$ for the case $G=C_4$ ($n=4$) and 2 tokens of different colors.

\subsection{The supertoken graphs of paths and cycles}

Let $G$ be a graph with $V(G)=\{1,2,\ldots,n\}$. 
For a vertex $(\alpha_1,\alpha_2,\ldots,\alpha_k)$ in $F^{1}_{k\times 1}(G)$, where $\alpha_i\in [n]$ for $i=1,2,\ldots,k$, 
we define 
\begin{equation}\label{eq:decrease}
    (\alpha'_1,\alpha'_2,\ldots,\alpha'_k) \preceq (\alpha_1,\alpha_2,\ldots,\alpha_k)
\end{equation}
if there exists an integer $j$ such that $\alpha'_j<\alpha_j$, and $\alpha'_l=\alpha_l$ for $l\in [k]\backslash \{j\}$, and
\begin{equation}\label{eq:increase}
(\alpha''_1,\alpha''_2,\ldots,\alpha''_k)\succeq(\alpha_1,\alpha_2,\ldots,\alpha_k)
\end{equation}
if there exists an integer $j$ such that $\alpha''_j>\alpha_j$, and $\alpha''_l=\alpha_l$ for $l\in [k]\backslash \{j\}$.
For a vertex $(\alpha_1,\alpha_2,\ldots,\alpha_k)$, 
we refer to the vertex $(\alpha'_1,\alpha'_2,\ldots,\alpha'_k)$ satisfying (\ref{eq:decrease}) as being on the left of $(\alpha_1,\alpha_2,\ldots,\alpha_k)$, and the vertex $(\alpha''_1,\alpha''_2,\ldots,\alpha''_k)$ satisfying (\ref{eq:increase}) as being on the right of $(\alpha_1,\alpha_2,\ldots,\alpha_k)$.\\

\begin{proposition}\label{prop:path-cycle}
    Let $G=P_n$ be a path with order $n$. Then, $F^{1}_{k\times 1}(G)$ contains at least one cycle for $k\le n-2$, and does not contain any cycle for $k=n-1,n$. Moreover, for $k=n$, $F^{1}_{k\times 1}(G)$ is the graph with $n!$ isolated vertices, and $F^{1}_{k\times 1}(G)=\underbrace{P_n \cup \cdots\cup P_n}_{(n-1)!}$ for $k=n-1$.
\end{proposition}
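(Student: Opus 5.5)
The statement about $F^{1}_{k\times 1}(P_n)$ splits into three regimes. I would handle them in the order $k=n$, $k=n-1$, $k\le n-2$, using the vertex labels $1,2,\ldots,n$ along the path.

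\emph{Case $k=n$.} Every vertex of $P_n$ carries exactly one token. A move needs an empty adjacent vertex, so no token can move and there are no edges. The graph therefore consists of $n!$ isolated vertices, one for each ordering of the tokens.

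\emph{Case $k=n-1$.} There is exactly one empty vertex, the hole. A move slides a token along an edge into the hole, so it is the same as moving the hole one step left or right. Tokens never pass each other on a path, so the left-to-right order of the $n-1$ labelled tokens is invariant under moves. For a fixed order, the configurations correspond bijectively to the position $h\in\{1,\ldots,n\}$ of the hole. Positions $h$ and $h\pm1$ are adjacent in $F^{1}_{(n-1)\times 1}(P_n)$, so each component is a copy of $P_n$. There are $(n-1)!$ orders, which gives the $(n-1)!$ disjoint copies of $P_n$ and, in particular, no cycle. The key point to check is that nothing else is adjacent: the symmetric difference of two adjacent configurations is a single edge $\{h,h+1\}$, and this forces the order to be preserved.

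\emph{Case $2\le k\le n-2$.} There are at least two holes, so I would exhibit an explicit cycle. Put token $1$ at vertex $1$ and token $2$ at vertex $3$. Place the remaining $k-2$ tokens on vertices $6,7,\ldots$; this fits since $k-2\le n-4$ when $n\ge 6$, and small $n$ need separate care. Then vertices $2$, $4$, $5$ are free. Moving token $1$ from $1$ to $2$ and token $2$ from $3$ to $4$ are independent moves, in the sense that each remains available after the other is performed. Together with their reverses they form a $4$-cycle
\[
(1,3,\ldots)\sim(2,3,\ldots)\sim(2,4,\ldots)\sim(1,4,\ldots)\sim(1,3,\ldots).
\]
This generalizes: whenever two tokens can each make a move whose endpoints are pairwise disjoint, the supertoken graph contains a $4$-cycle, the usual ``commuting moves'' square.

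Since $k\le n-2$, I would choose the configuration more economically. Place the $k-2$ tokens on vertices $5,\ldots,k+2$, token $1$ at vertex $1$, token $2$ at vertex $3$, and leave vertices $2$ and $4$ empty. This uses only $k+2\le n$ vertices. Token $1$ can move $1\to2$ and token $2$ can move $3\to4$, the four vertices involved are distinct, and neither move blocks the other. For $k=1$ there are no cycles at all, since $F^{1}_{1\times1}(P_n)\cong P_n$, so the statement should be read with $k\ge2$. The main obstacle is making sure the two moves really commute: their vertex sets $\{1,2\}$ and $\{3,4\}$ must be disjoint. The configuration above guarantees this, and the four resulting vertices are distinct, so they form a genuine $4$-cycle.
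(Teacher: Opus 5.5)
Your proof is correct and follows essentially the same route as the paper. For $k\le n-2$ you use the same explicit $4$-cycle, moving tokens $1$ and $2$ within $\{1,2\}$ and $\{3,4\}$. For $k=n-1$ your hole-position argument with the invariant left-to-right token order is a cleaner formalization of the paper's left/right-neighbor argument. You are also right that the statement implicitly needs $k\ge 2$, since $F^{1}_{1\times 1}(P_n)\cong P_n$ is acyclic.
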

\begin{proof}
Define $V(P_n)=\{1,2,\ldots,n\}$. Let $(\alpha_1,\alpha_2,\ldots,\alpha_k)$ be one vertex of $F^{1}_{k\times 1}(P_n)$, where $\alpha_i\in [n]$ and $\alpha_1,\ldots,\alpha_k$ are distinct. The case $k=n$ is trivial. If $k\le n-2$, then we can choose $\alpha_3,\ldots,\alpha_k\in\{5,\ldots,n\}$, that is, there are tokens at vertices 1 and 3 ($\alpha_1=1$ and $\alpha_2=3$). Thus, $(1,3,\alpha_3,\ldots,\alpha_k)$, $(1,4,\alpha_3,\ldots,\alpha_k)$, $(2,4,\alpha_3,\ldots,\alpha_k)$, $(2,3,\alpha_3,\ldots,\alpha_k)$, and $(1,3,\alpha_3,\ldots,\alpha_k)$ is a cycle, as desired.
Suppose next that $k=n-1$. 
Consider the vertex $(\alpha_1,\alpha_2,\ldots,\alpha_k)$, where $1,n\in \{\alpha_1,\ldots,\alpha_k\}$.
Then, there exists exactly one vertex $(\alpha'_1,\alpha'_2,\ldots,\alpha'_k)$ satisfying $(\alpha'_1,\alpha'_2,\ldots,\alpha'_k) \preceq(\alpha_1,\alpha_2,\ldots,\alpha_k)$ on its left and exactly one vertex $(\alpha''_1,\alpha''_2,\ldots,\alpha''_k)$ satisfying $(\alpha''_1,\alpha''_2,\ldots,\alpha''_k)$ $\succeq(\alpha_1,\alpha_2,\ldots,\alpha_k)$ on its right.
Thus, starting from the vertex $(\alpha_1,\alpha_2,\ldots,\alpha_k)$, the leftmost vertex must be $(\tau(1),\tau(2),\ldots,\tau(n-1))$ and the rightmost vertex must be $(\sigma(2),\sigma(3),\ldots,$
$\sigma(n))$, where $\tau$ is a permutation on $\{1,\ldots,n-1\}$ and $\sigma$ is a permutation on $\{2,\ldots,n\}$. 
Moreover, we see that there are no more vertices adjacent to any of the vertices in the path starting from $(\alpha_1,\alpha_2,\ldots,\alpha_k)$ with end vertices $(\tau(1),\tau(2),\ldots,\tau(n-1))$ and $(\sigma(2),\sigma(3),\ldots,\sigma(n))$. 
A 
vertex $(\alpha^*_1,\alpha^*_2,\ldots,\alpha^*_k)$ with $1\notin \{\alpha^*_1,\alpha^*_2,\ldots,\alpha^*_k\}$ or $n\notin \{\alpha^*_1,\alpha^*_2,\ldots,\alpha^*_k\}$ must be included in the path starting from one vertex $(\alpha_1,\alpha_2,\ldots,\alpha_k)$ with $1,n\in \{\alpha_1,\ldots,\alpha_k\}$.  
The set $\{\alpha_1,\alpha_2,\ldots,\alpha_k\}$ has $k!=(n-1)!$ possible permutations, so $F^{1}_{(n-1)\times 1}(P_n)=\underbrace{P_n \cup \cdots\cup P_n}_{(n-1)!}$. 
\end{proof}
\begin{example}
   The supertoken graph $F^{1}_{3\times 1}(P_4)$ is shown in Figure \ref{fig:example-of-path}.
\end{example}
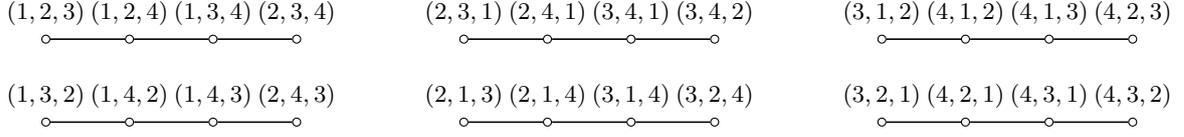
\begin{figure}   
    \begin{center}
    \begin{tikzpicture}[scale=1.1,auto,swap]
    \vertex (123) at (0,0) [label=above:{{\footnotesize$(1,2,3)$}}]{};
    \vertex (124) at (1,0) [label=above:{{\footnotesize $(1,2,4)$}}]{};
    \vertex (134) at (2,0) [label=above:{{\footnotesize $(1,3,4)$}}]{};
    \vertex (234) at (3,0) [label=above:{{\footnotesize $(2,3,4)$}}]{};
    \draw [line width=0.6pt] (234)--(134);
    \draw [line width=0.6pt] (124)--(134);
    \draw [line width=0.6pt] (124)--(123);
    \vertex (231) at (5,0) [label=above:{{\footnotesize$(2,3,1)$}}]{};
    \vertex (241) at (6,0) [label=above:{{\footnotesize $(2,4,1)$}}]{};
    \vertex (341) at (7,0) [label=above:{{\footnotesize $(3,4,1)$}}]{};
    \vertex (342) at (8,0) [label=above:{{\footnotesize $(3,4,2)$}}]{};
    \draw [line width=0.6pt] (342)--(341);
    \draw [line width=0.6pt] (341)--(241);
    \draw [line width=0.6pt] (241)--(231);
    \vertex (312) at (10,0) [label=above:{{\footnotesize$(3,1,2)$}}]{};
    \vertex (412) at (11,0) [label=above:{{\footnotesize $(4,1,2)$}}]{};
    \vertex (413) at (12,0) [label=above:{{\footnotesize $(4,1,3)$}}]{};
    \vertex (423) at (13,0) [label=above:{{\footnotesize $(4,2,3)$}}]{};
    \draw [line width=0.6pt] (423)--(413);
    \draw [line width=0.6pt] (412)--(413);
    \draw [line width=0.6pt] (412)--(312);
    \vertex (132) at (0,-1) [label=above:{{\footnotesize$(1,3,2)$}}]{};
    \vertex (142) at (1,-1) [label=above:{{\footnotesize $(1,4,2)$}}]{};
    \vertex (143) at (2,-1) [label=above:{{\footnotesize $(1,4,3)$}}]{};
    \vertex (243) at (3,-1) [label=above:{{\footnotesize $(2,4,3)$}}]{};
    \draw [line width=0.6pt] (243)--(143);
    \draw [line width=0.6pt] (143)--(142);
    \draw [line width=0.6pt] (142)--(132);
    \vertex (213) at (5,-1) [label=above:{{\footnotesize$(2,1,3)$}}]{};
    \vertex (214) at (6,-1) [label=above:{{\footnotesize $(2,1,4)$}}]{};
    \vertex (314) at (7,-1) [label=above:{{\footnotesize $(3,1,4)$}}]{};
    \vertex (324) at (8,-1) [label=above:{{\footnotesize $(3,2,4)$}}]{};
    \draw [line width=0.6pt] (324)--(314);
    \draw [line width=0.6pt] (214)--(314);
    \draw [line width=0.6pt] (214)--(213);
    \vertex (321) at (10,-1) [label=above:{{\footnotesize$(3,2,1)$}}]{};
    \vertex (421) at (11,-1) [label=above:{{\footnotesize $(4,2,1)$}}]{};
    \vertex (431) at (12,-1) [label=above:{{\footnotesize $(4,3,1)$}}]{};
    \vertex (432) at (13,-1) [label=above:{{\footnotesize $(4,3,2)$}}]{};
    \draw [line width=0.6pt] (432)--(431);
    \draw [line width=0.6pt] (421)--(431);
    \draw [line width=0.6pt] (421)--(321);
    \end{tikzpicture}
    \caption{The supertoken graph $F^{1}_{3\times 1}(P_4)$. 
    }
     \label{fig:example-of-path}
    \end{center}
\end{figure}
\begin{proposition}\label{prop:cycle-cycle}
    Let $G=C_n$ be a cycle with order $n$. Then, $F^{1}_{k\times 1}(G)$ contains at least one cycle for $k\le n-1$, and does not contain any cycle for $k=n$. Moreover, for $k=n$, the supertoken graph $F^{1}_{k\times 1}(G)$ is the graph with $n!$ isolated vertices and $F^{1}_{k\times 1}(G)=\underbrace{C_{n(n-1)} \cup \cdots\cup C_{n(n-1)}}_{(n-2)!}$ for $k=n-1$.
\end{proposition}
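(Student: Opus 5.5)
Following Proposition~\ref{prop:path-cycle}, I identify $V(C_n)=\mathbb{Z}_n=\{1,\dots,n\}$ with indices taken mod $n$, and write a vertex of $F^{1}_{k\times 1}(C_n)$ as an ordered $k$-tuple $(\alpha_1,\dots,\alpha_k)$ of distinct vertices. The case $k=n$ is immediate. Every vertex of $C_n$ is occupied, so no token can move. Hence $F^{1}_{n\times 1}(C_n)$ consists of $n!$ isolated vertices, and in particular it has no cycle.

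For $k\le n-2$, I exhibit a cycle in the same way as for the path. Place tokens $1$ and $2$ at vertices $1$ and $3$, and the other tokens at vertices outside $\{1,2,3,4\}$. Then the four vertices
\[
(1,3,\dots),\quad (1,4,\dots),\quad (2,4,\dots),\quad (2,3,\dots)
\]
form a $4$-cycle. When $n=4$ and $k=2$, vertex $4$ is adjacent to vertex $1$; this causes no harm, since all four moves used are along edges of $C_n$. Therefore the real work is the case $k=n-1$. Once that case is done, it also yields a cycle, so it settles the existence claim for all $k\le n-1$.

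For $k=n-1$ there is exactly one empty vertex $h$ (the ``hole''), and exactly two tokens can move: the ones on $h-1$ and on $h+1$. Each such move sends the hole to $h\mp1$. So every vertex of $F^{1}_{(n-1)\times 1}(C_n)$ has degree exactly $2$, and each component is a cycle. I must show each component has length exactly $n(n-1)$. To do this, I record a configuration as the hole position $h$ together with the cyclic sequence of tokens read clockwise starting from $h+1$. I then follow the walk that always moves the hole clockwise, that is, moves the token at $h+1$ into $h$.
\begin{itemize}
\item[$\circ$] Each such step changes the hole from $h$ to $h+1$.
\item[$\circ$] The cyclic order of the tokens is invariant, because moving a token into the adjacent hole does not change the cyclic reading order.
\item[$\circ$] After $n$ steps the hole returns to $h$, and each token has been shifted once. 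Reading from the hole, the linear sequence has been rotated by one position.
\end{itemize}
Hence the walk returns to its start only after $n\cdot(n-1)$ steps, since rotating a sequence of $n-1$ distinct tokens has order $n-1$. Each component is therefore $C_{n(n-1)}$, and there is a cycle whenever $n\ge3$.

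Finally, I count components. There are $n\cdot(n-1)!=n!$ vertices and each component has $n(n-1)$ of them, so there are $(n-2)!$ components. This matches the number of cyclic orders of $n-1$ tokens, which confirms that the cyclic order is a complete invariant.

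The main obstacle is making the rotation argument rigorous. I need to show precisely that one full revolution of the hole shifts the reading sequence by exactly one position, and that no smaller number of steps returns to the start. I would do this by tracking token positions explicitly: after the hole makes $n$ clockwise steps, each token has moved exactly one vertex counterclockwise. A configuration recurs only if the hole position and all token positions coincide, which forces the number of steps to be a multiple of $n(n-1)$.
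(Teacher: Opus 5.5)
Your proof is correct in substance and follows essentially the paper's route. The paper also sweeps the hole around $C_n$. It observes that $n-1$ moves in one direction take $(\alpha_1,\ldots,\alpha_k)$ to $(\sigma(\alpha_1),\ldots,\sigma(\alpha_k))$ with $\sigma(i)=i+1$, and uses $\sigma^n=\mathrm{id}$ to get a closed walk of length $n(n-1)$. You make explicit two points the paper leaves implicit: that the graph is $2$-regular, and that the walk does not close up earlier. You also count components as $n!/(n(n-1))$, where the paper argues that $k$ of the $k!$ orderings of a fixed token set lie on one cycle.

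One claim needs fixing, and it sits exactly at the step you call the main obstacle. It is false that after $n$ clockwise hole moves each token has moved exactly once, or exactly one vertex counterclockwise. There are $n$ moves but only $n-1$ tokens, so the token starting at $h+1$ moves twice ($h+1\to h\to h-1$). Indeed, if every token had shifted by $-1$ while the hole returned to $h$, the token from $h+1$ would sit on the hole. What does hold is this: after $n-1$ moves every token has moved one vertex counterclockwise and the hole is at $h-1$, so the whole configuration is rotated by one vertex. That is the paper's observation.

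The cleanest repair of your argument is to look at a single move. A clockwise move takes the first token of the reading sequence and puts it at the end. So after $m$ moves the state is: hole at $h+m$, reading sequence rotated by $m$. A return therefore forces $n\mid m$ and $(n-1)\mid m$, the latter because the tokens are distinct. Since $\gcd(n,n-1)=1$, this gives $n(n-1)\mid m$, and $m=n(n-1)$ does return. Your stated conclusion, a rotation by one position after $n$ steps, agrees with this because $n\equiv 1 \pmod{n-1}$; only the justification changes.

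Finally, your $4$-cycle uses two tokens and four vertices, so it does not cover $k=1$. For $n=3$, $k=1$ is the only value with $k\le n-2$. That case is trivial, since $F^{1}_{1\times 1}(C_n)\cong C_n$. The paper's ``similar to the path'' argument has the same omission.
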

\begin{proof}
    The case $k=n$ is trivial, and the case $k\le n-2$ is similar to Proposition \ref{prop:path-cycle}.
    Consider $k=n-1$. 
    Denote $V(C_n)=\{1,2,\ldots,n\}$. 
    Let $\sigma$ be a permutation on $\{1,\ldots,n\}$ such that $\sigma(i)=i+1~(\text{mod}~n)$.
    For a vertex $(\alpha_1,\alpha_2,\ldots,\alpha_k)$ in $F^{1}_{k\times 1}(C_n)$  
    satisfying that $\{\alpha_1,\alpha_2,\ldots,\alpha_k\}$ is a permutation on $\{2,3,\ldots,n\}$ with $\alpha_j=n$, there is no vertex being on its right from the definition shown in (\ref{eq:increase}). Then, we define the relation $(\alpha'_1,\ldots,\alpha'_k)\succeq (\alpha_1,\ldots,\alpha_k)$ if $\alpha'_j=1$ and $\alpha'_l=\alpha_l$ for $l\in [k]\backslash\{j\}$, which implies that $(\alpha_1,\ldots,\sigma(\alpha_j),\ldots,\alpha_k)\succeq (\alpha_1,\ldots,\alpha_j,\ldots,\alpha_k)$. 

    Then, for any vertex $(\alpha_1,\alpha_2,\ldots,\alpha_k)$ in $F^{1}_{k\times 1}(C_n)$, there exists a vertex $(\alpha'_1,\alpha'_2,\ldots,\alpha'_k)$ on its right satisfying $(\alpha'_1,\alpha'_2,\ldots,\alpha'_k)$
    $\succeq (\alpha_1,\alpha_2,\ldots,\alpha_k)$. Moreover, the path from $(\alpha_1,\alpha_2$ $,\ldots,\alpha_k)$ to $(\sigma(\alpha_1),\sigma(\alpha_2),\ldots,\sigma(\alpha_k))$ is of length $k=n-1$.
    Note that $(\sigma^n(\alpha_1),\sigma^n(\alpha_2),\ldots,$ $\sigma^n(\alpha_k))=(\alpha_1,\alpha_2,\ldots,\alpha_k)$.
    Thus, there is a cycle of length $n(n-1)$ including $(\alpha_1,\alpha_2,\ldots,\alpha_k)$.
    Observe that the set $\{\alpha_1,\alpha_2,\ldots,\alpha_k\}$ has $k!=(n-1)!$ possible permutations and $k$ of them are in the same cycle of length $n(n-1)$. Thus, $F^{1}_{
    (n-1)\times 1}(C_n)=\underbrace{C_{n(n-1)} \cup \cdots\cup C_{n(n-1)}}_{(n-2)!}$.
\end{proof}
\begin{example}
    The supertoken graph $F^{1}_{3\times 1}(C_4)$ is shown in Figure \ref{fig:example-of-cycle}.
\end{example}
    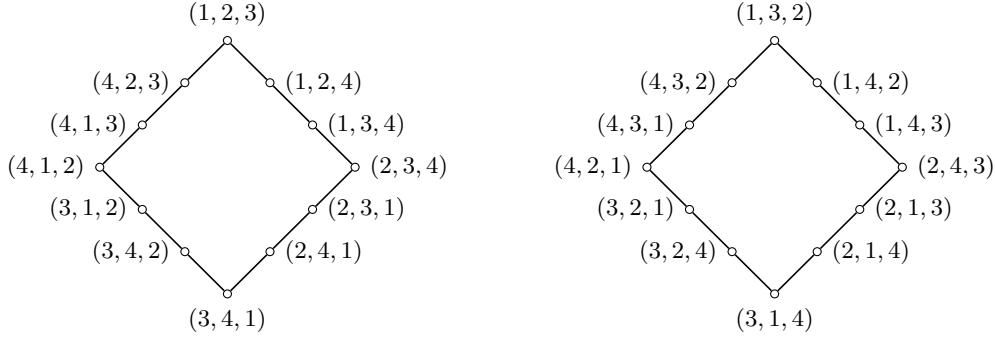
\begin{figure}   
    \begin{center}
    \begin{tikzpicture}[scale=0.8,auto,swap]
    \vertex (123) at (0,2.1) [label=above:{{\footnotesize$(1,2,3)$}}]{};
    \vertex (124) at (0.7,1.4) [label=right:{{\footnotesize $(1,2,4)$}}]{};
    \vertex (134) at (1.4,0.7) [label=right:{{\footnotesize $(1,3,4)$}}]{};
    \vertex (234) at (2.1,0) [label=right:{{\footnotesize $(2,3,4)$}}]{};
    \vertex (231) at (1.4,-0.7) [label=right:{{\footnotesize$(2,3,1)$}}]{};
    \vertex (241) at (0.7,-1.4) [label=right:{{\footnotesize $(2,4,1)$}}]{};
    \vertex (341) at (0,-2.1) [label=below:{{\footnotesize $(3,4,1)$}}]{};
    \vertex (342) at (-0.7,-1.4) [label=left:{{\footnotesize $(3,4,2)$}}]{};
    \vertex (312) at (-1.4,-0.7) [label=left:{{\footnotesize$(3,1,2)$}}]{};
    \vertex (412) at (-2.1,0) [label=left:{{\footnotesize $(4,1,2)$}}]{};
    \vertex (413) at (-1.4,0.7) [label=left:{{\footnotesize $(4,1,3)$}}]{};
    \vertex (423) at (-0.7,1.4) [label=left:{{\footnotesize $(4,2,3)$}}]{};
    \draw [line width=0.6pt] (123)--(124);
    \draw [line width=0.6pt] (124)--(134);
    \draw [line width=0.6pt] (134)--(234);
    \draw [line width=0.6pt] (234)--(231);
    \draw [line width=0.6pt] (231)--(241);
    \draw [line width=0.6pt] (241)--(341);
    \draw [line width=0.6pt] (341)--(342);
    \draw [line width=0.6pt] (342)--(312);
    \draw [line width=0.6pt] (312)--(412);
    \draw [line width=0.6pt] (412)--(413);
    \draw [line width=0.6pt] (423)--(413);
    \draw [line width=0.6pt] (423)--(123);
    \vertex (132) at (9,2.1) [label=above:{{\footnotesize$(1,3,2)$}}]{};
    \vertex (142) at (9.7,1.4) [label=right:{{\footnotesize $(1,4,2)$}}]{};
    \vertex (143) at (10.4,0.7) [label=right:{{\footnotesize $(1,4,3)$}}]{};
    \vertex (243) at (11.1,0) [label=right:{{\footnotesize $(2,4,3)$}}]{};
    \vertex (213) at (10.4,-0.7) [label=right:{{\footnotesize$(2,1,3)$}}]{};
    \vertex (214) at (9.7,-1.4) [label=right:{{\footnotesize $(2,1,4)$}}]{};
    \vertex (314) at (9,-2.1) [label=below:{{\footnotesize $(3,1,4)$}}]{};
    \vertex (324) at (8.3,-1.4) [label=left:{{\footnotesize $(3,2,4)$}}]{};
    \vertex (321) at (7.6,-0.7) [label=left:{{\footnotesize$(3,2,1)$}}]{};
    \vertex (421) at (6.9,0) [label=left:{{\footnotesize $(4,2,1)$}}]{};
    \vertex (431) at (7.6,0.7) [label=left:{{\footnotesize $(4,3,1)$}}]{};
    \vertex (432) at (8.3,1.4) [label=left:{{\footnotesize $(4,3,2)$}}]{};
    \draw [line width=0.6pt] (132)--(142);
    \draw [line width=0.6pt] (142)--(143);
    \draw [line width=0.6pt] (143)--(243);
    \draw [line width=0.6pt] (243)--(213);
    \draw [line width=0.6pt] (213)--(214);
    \draw [line width=0.6pt] (214)--(314);
    \draw [line width=0.6pt] (314)--(324);
    \draw [line width=0.6pt] (324)--(321);
    \draw [line width=0.6pt] (321)--(421);
    \draw [line width=0.6pt] (421)--(431);
    \draw [line width=0.6pt] (431)--(432);
    \draw [line width=0.6pt] (432)--(132);
    \end{tikzpicture}
    \caption{The supertoken graph $F^{1}_{3\times 1}(C_4)$. 
    }
     \label{fig:example-of-cycle}
    \end{center}
\end{figure}

\section{The supertoken graphs $F^{s}_{k\times 1}(G)$}
\label{sec:F_k^s(G)-colors-dif}

In this case, the $k$ tokens are different, and the maximum number of tokens per vertex is $s$, with $1<s<k$.
For integers $t_1,\ldots,t_i$ satisfying $t_1+\cdots+t_i\le k$, we define $\binom{k}{t_1,\ldots,t_i}=\frac{k!}{t_{1}!\cdots t_{i}!(k-t_{1}-\cdots-t_{i})!}$.\\
\begin{proposition}
Let $G=(V,E)$ be a graph on $n$ vertices. The order and size of the supertoken graph $F^{s}_{k\times 1}(G)$ are, respectively,
\begin{eqnarray}
|V(F^{s}_{k\times 1}(G))| &= &\sum_{j=0}^{\lfloor\frac{k}{s+1}\rfloor}(-1)^j\binom{n}{j}\sum_{\substack{t_{i_1},t_{i_2},\ldots,t_{i_j}\ge s+1\\ t_{i_1}+t_{i_2}+\cdots+t_{i_j}\le k}}\binom{k}{t_{i_1},\ldots,t_{i_j}}(n-j)^{k-t_{i_1}-\cdots-t_{i_j}}\label{eq:nomial2}\\
&:=& h(n,k,s),\notag
\end{eqnarray}
and 
\begin{equation}\label{eq:edge-fks2}
|E(F_k^s(G))| = |E|\sum_{\substack{0\le r_u,r_v\le s-1\\r_u+r_v\le k-1}}(r_u+1)\binom{k}{r_u+1,r_v}h(n-2,k-r_u-r_v-1,s).
\end{equation}
Specifically, for $j=0$, we define the second sum $\sum_{\substack{t_{i_1},t_{i_2},\ldots,t_{i_j}\ge s+1\\ t_{i_1}+t_{i_2}+\cdots+t_{i_j}\le k}}\binom{k}{t_{i_1},\ldots,t_{i_j}}$ to be equal to $1$ in (\ref{eq:nomial2}), and for $r_u+r_v=k-1$, we define $h(n,0,s)$ to be $1$ in (\ref{eq:edge-fks2}).
\end{proposition}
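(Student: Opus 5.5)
The order formula comes from inclusion–exclusion on labeled token placements. The size formula comes from counting, for each edge $uv$ of $G$, the pairs of configurations that differ by one token moving across $uv$.

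\emph{Order.} A vertex of $F^{s}_{k\times 1}(G)$ is a function $\phi:[k]\to V$, assigning token $i$ to vertex $\phi(i)$, such that every fibre has size at most $s$. Without the capacity constraint there are $n^k$ such functions. For $v\in V$, let $Q_v$ be the set of functions whose fibre over $v$ has at least $s+1$ elements. By inclusion–exclusion,
\[|V(F^{s}_{k\times 1}(G))|=\sum_{j\ge 0}(-1)^j\sum_{|J|=j}|\textstyle\bigcap_{v\in J}Q_v|.\]
A nonempty intersection forces $j(s+1)\le k$, which explains the upper limit $\lfloor k/(s+1)\rfloor$. There are $\binom{n}{j}$ choices of $J$. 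For a fixed $J=\{v_{i_1},\dots,v_{i_j}\}$, we count functions by fixing the exact fibre sizes $t_{i_1},\dots,t_{i_j}\ge s+1$ on $J$. Choosing which tokens go to each vertex of $J$ gives $\binom{k}{t_{i_1},\ldots,t_{i_j}}$ ways. The remaining $k-\sum t$ tokens then go freely to the other $n-j$ vertices, giving $(n-j)^{k-\sum t}$ ways. Summing over all admissible $t$'s yields \eqref{eq:nomial2}, and the $j=0$ term is $n^k$, matching the stated convention. One point needs care here. In this inclusion–exclusion, only the vertices of $J$ are required to exceed $s$; the other vertices are unrestricted. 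This is exactly why the factor is $(n-j)^{\ldots}$ with no further constraint, so the formula is consistent.

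\emph{Size.} Fix an edge $uv\in E$. The edges of $F^{s}_{k\times 1}(G)$ induced by $uv$ are unordered pairs $\{A,B\}$ in which a single token $x$ sits on $u$ in $A$ and on $v$ in $B$, and all other tokens agree. We orient each pair so that $x$ is on $u$ in $A$. Let $r_u$ and $r_v$ be the numbers of \emph{other} tokens on $u$ and $v$, respectively. Feasibility of both $A$ and $B$ requires $r_u+1\le s$ and $r_v+1\le s$, i.e.\ $0\le r_u,r_v\le s-1$, and we also need $r_u+r_v\le k-1$. For given $(r_u,r_v)$ we count the configurations $A$ together with the choice of the moving token. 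We first choose the set of $r_u+1$ tokens on $u$ and the $r_v$ tokens on $v$, in $\binom{k}{r_u+1,r_v}$ ways. We then choose which of the $r_u+1$ tokens on $u$ moves, in $r_u+1$ ways. Finally, we place the remaining $k-r_u-r_v-1$ tokens on $V\setminus\{u,v\}$ with capacity $s$, in $h(n-2,k-r_u-r_v-1,s)$ ways; when no tokens remain this factor is $1$, as stated. Each edge of $F^{s}_{k\times 1}(G)$ arises exactly once: the orientation is determined by which endpoint holds $x$ on $u$, and $G$ is simple, so distinct edges of $G$ give distinct symmetric differences. Summing over $(r_u,r_v)$ and multiplying by $|E|$ gives \eqref{eq:edge-fks2}.

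The main obstacle is the bookkeeping in the edge count: avoiding double counting through a consistent orientation, and making sure the boundary conventions ($h(n,0,s)=1$, the empty inner sum equal to $1$ at $j=0$) line up with the degenerate cases. I would check the final formula against a small example, such as $F^2_{3\times 1}(C_4)$, by direct enumeration.
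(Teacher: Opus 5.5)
Your proof is correct and follows the paper's argument. For the order, you use inclusion--exclusion over the sets of configurations in which a given vertex carries at least $s+1$ tokens, computing each $j$-fold intersection by fixing exact fibre sizes, which gives $\binom{k}{t_{i_1},\ldots,t_{i_j}}(n-j)^{k-\sum t}$; for the size, you count, for each edge $uv$, over $(r_u,r_v)$ using $\binom{k}{r_u+1,r_v}$, a factor $r_u+1$ for the moving token, and $h(n-2,k-r_u-r_v-1,s)$ for the remaining tokens. Your orientation argument for why each edge of $F^{s}_{k\times 1}(G)$ is counted exactly once is a detail the paper leaves implicit.
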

\begin{proof}
    Let $(\alpha_1,\alpha_2,\ldots,\alpha_k)$ be a vertex of $F^{k}_{k\times 1}(G)$. We get $|V(F^{k}_{k\times 1}(G))|=n^k$.
    Suppose that $j$ is the number of distinct vertices appearing at least $s+1$ times in $\alpha_1,\alpha_2,\ldots,\alpha_k$. 
    Then, $j\le {\lfloor\frac{k}{s+1}\rfloor}$. 
    Let $P_i\subset V(F^{k}_{k\times 1}(G))$ be the set where at least $s+1$ of $\alpha_1,\alpha_2,\ldots,\alpha_k$ are equal to vertex $i\in V$. 
    Then,
    \begin{equation*}
        |P_i|=\sum_{t_i=s+1}^k\binom{k}{t_i}(n-1)^{k-t_i}.
    \end{equation*}
    Moreover, we have 
    \begin{equation*}
        |P_{i_1}\cap \cdots \cap P_{i_j}|=\sum_{\substack{t_{i_1},t_{i_2},\ldots,t_{i_j}\ge s+1\\ t_{i_1}+t_{i_2}+\cdots+t_{i_j}\le k}}\binom{k}{t_{i_1},\ldots,t_{i_j}}(n-j)^{k-t_{i_1}-\cdots-t_{i_j}}
    \end{equation*}
    for $j=1,\ldots,{\lfloor\frac{k}{s+1}\rfloor}$.
    By the inclusion-exclusion principle, we find that 
    \begin{eqnarray*}
       && |V(F^{s}_{k\times 1}(G))|=|V(F^{k}_{k\times 1}(G))|-\left|P_{i_1}\cup \cdots \cup P_{i_{\lfloor\frac{k}{s+1}\rfloor}}\right|\\
        &=&|V(F^{k}_{k\times 1}(G))|-\sum|P_i|+\sum|P_{i_1}\cap P_{i_2}|+\cdots\\
        &+&(-1)^{\lfloor\frac{k}{s+1}\rfloor}\sum\left|P_{i_1}\cap P_{i_2} \cap \cdots \cap P_{i_{\lfloor\frac{k}{s+1}\rfloor}}\right|\\
        &=&\sum_{j=0}^{\lfloor\frac{k}{s+1}\rfloor}(-1)^j\binom{n}{j}\sum_{\substack{t_{i_1},t_{i_2},\ldots,t_{i_j}\ge s+1\\ t_{i_1}+t_{i_2}+\cdots+t_{i_j}\le k}}\binom{k}{t_{i_1},\ldots,t_{i_j}}(n-j)^{k-t_{i_1}-\cdots-t_{i_j}}.
    \end{eqnarray*}
    Consider one edge $uv\in E$. 
    Then, the edges corresponding to $uv$ in $F^{s}_{k\times 1}(G)$ are 
    \begin{align*}
        \{A_uA_v:&A_u=(\alpha_1,\ldots,\alpha_i=u,\ldots,\alpha_k),A_v=(\alpha_1,\ldots,\alpha_i=v,\ldots,\alpha_k)\\
        &\mbox{with}~\alpha_j\in V~\mbox{for}~j\in [k]\backslash\{i\}\}.
    \end{align*}
    Let $r_u$ and $r_v$ be the numbers of elements equal to $u$ and $v$ in $\{\alpha_1,\ldots,\alpha_k\}\backslash\{\alpha_i\}$. 
    It follows that $0\le r_u,r_v\le s-1$ and $r_u+r_v\le k-1$.
    We get the number of edges corresponding to $uv$ in $F^{s}_{k\times 1}(G)$ is 
    \begin{equation}\label{eq:edge-fk1s}
    \sum_{\substack{0\le r_u,r_v\le s-1\\r_u+r_v\le k-1}}(r_u+1)\binom{k}{r_u+1,r_v}h(n-2,k-r_u-r_v-1,s).    
    \end{equation}
    Therefore, the equation (\ref{eq:edge-fks2}) holds.
\end{proof}

\begin{example}
    Consider a star $S_4$ (with 5 vertices) with vertex set $V(S_4)=\{0,1,2,3,4\}$, where $0$ is the central vertex. The order of the supertoken graph $F^2_{3\times1}(S_4)$ is $h(5,3,2)=5^3+(-1)^{1}\binom{5}{1}\binom{3}{3}(5-1)^{3-3}=120$. 
    We show the edges in $F^2_{3\times1}(S_4)$ corresponding to the edge $01$ in $G$. 
    Let $A=(\alpha_1,\alpha_2,\alpha_3)\in V(F^2_{3\times1}(S_4))$ and $B=(\beta_1,\beta_2,\beta_3)\in V(F^2_{3\times1}(S_4))$ such that $\alpha_i=0$ and $\beta_i=1$ for some $i\in [3]$ and $\alpha_j=\beta_j$ for $j \in [3]\backslash\{i\}$. 
    We consider all possible cases of $A$ and $B$.
    Let $r_0$ and $r_1$ be the numbers of elements equal to $0$ and $1$ in 
    $\{\alpha_1,\alpha_2,\alpha_3\}\backslash\{\alpha_i\}$.
    Since $s=2$, it follows that $0\le r_0,r_1\le 1$. Then, 
    there are four possible combinations of values for $r_0$ and $r_1$.
    
    {\bf Case 1.} $r_0=0$ and $r_1=0$.
    Here, the value $i$ can be $1,2$, or $3$, that is, there are $3$ possible values for $i$. 
    For the remaining elements in $\{\alpha_1,\alpha_2,\alpha_3\}\backslash\{\alpha_i\}$,
    the possible combinations are 
    \begin{equation*}
        \{2,3\},\{3,2\},\{2,4\},\{4,2\},\{3,4\},\{4,3\},\{2,2\},\{3,3\},\{4,4\}.
    \end{equation*}
    In this case, there are $3\times9=27$ edges corresponding to $01$, see Table \ref{table:case1}.
    \begin{table}[t]
\begin{center}
\begin{tabular}{|c|c|c|c|c|c|c|c|c|c|}
  \hline
  $(0,2,3)$&$(0,3,2)$   &$(0,2,4)$&$(0,4,2)$&$(0,3,4)$&$(0,4,3)$&$(0,2,2)$&$(0,3,3)$&$(0,4,4)$ \\
  $(1,2,3)$&$(1,3,2)$   &$(1,2,4)$&$(1,4,2)$&$(1,3,4)$&$(1,4,3)$&$(1,2,2)$&$(1,3,3)$&$(1,4,4)$\\
  \hline
  $(2,0,3)$&$(3,0,2)$   &$(2,0,4)$&$(4,0,2)$&$(3,0,4)$&$(4,0,3)$&$(2,0,2)$&$(3,0,3)$&$(4,0,4)$ \\
  $(2,1,3)$&$(3,1,2)$   &$(2,1,4)$&$(4,1,2)$&$(3,1,4)$&$(4,1,3)$&$(2,1,2)$&$(3,1,3)$&$(4,1,4)$\\
  \hline
  $(2,3,0)$&$(3,2,0)$   &$(2,4,0)$&$(4,2,0)$&$(3,4,0)$&$(4,3,0)$&$(2,2,0)$&$(3,3,0)$&$(4,4,0)$ \\
  $(2,3,1)$&$(3,2,1)$   &$(2,4,1)$&$(4,2,1)$&$(3,4,1)$&$(4,3,1)$&$(2,2,1)$&$(3,3,1)$&$(4,4,1)$\\
  \hline
\end{tabular}
\caption{Edges in $F^2_{3\times1}(S_4)$ corresponding to $01$ for $r_0=0$ and $r_1=0$. 
}
\label{table:case1}
\end{center}
\end{table}

 {\bf Case 2.} $r_0=0$ and $r_1=1$.
    Here, the value $i$ can be $1,2$, or $3$, and there exist $j\in [3]\backslash \{i\}$ such that $\alpha_{j}=1$. 
    So, there are $6$ possible ordered pairs for $(i,j)$, which are $(1,2), (1,3), (2,3), (2,1), (3,1)$, and $(3,2)$. 
    For the remaining elements in $\{\alpha_1,\alpha_2,\alpha_3\}\backslash\{\alpha_{i},\alpha_{j}\}$,
    the possible combinations are $\{2\},\{3\},\{4\}$.
    In this case, there are $6\times3=18$ edges corresponding to $01$, see Table \ref{table:case2}.
    \begin{table}[t]
\begin{center}
\begin{tabular}{|c|c|c|c|c|c|c|c|c|c|}
  \hline
  $(0,1,2)$&$(0,1,3)$   &$(0,1,4)$&$(0,2,1)$&$(0,3,1)$&$(0,4,1)$&$(2,0,1)$&$(3,0,1)$&$(4,0,1)$ \\
  $(1,1,2)$&$(1,1,3)$   &$(1,1,4)$&$(1,2,1)$&$(1,3,1)$&$(1,4,1)$&$(2,1,1)$&$(3,1,1)$&$(4,1,1)$\\
  \hline
  $(1,0,2)$&$(1,0,3)$   &$(1,0,4)$&$(1,2,0)$&$(1,3,0)$&$(1,4,0)$&$(2,1,0)$&$(3,1,0)$&$(4,1,0)$ \\
  $(1,1,2)$&$(1,1,3)$   &$(1,1,4)$&$(1,2,1)$&$(1,3,1)$&$(1,4,1)$&$(2,1,1)$&$(3,1,1)$&$(4,1,1)$\\
  \hline
\end{tabular}
\caption{Edges in $F^2_{3\times1}(S_4)$ corresponding to $01$ for $r_0=0$ and $r_1=1$. 
}
\label{table:case2}
\end{center}
\end{table}

 {\bf Case 3.} $r_0=1$ and $r_1=0$.
    There exist $i_1,i_2\in [3]$ such that $\alpha_{i_1}=\alpha_{i_2}=0$.
    Then, there are $3$ possible ordered pairs for $(i_1,i_2)$, which are $(1,2), (1,3)$, and $(2,3)$ (here, $(i_2,i_1)=(2,1)$ is the same as $(i_2,i_1)=(1,2)$). 
    For the remaining elements in $\{\alpha_1,\alpha_2,\alpha_3\}\backslash\{\alpha_{i_1},\alpha_{i_2}\}$,
    the possible combinations are $\{2\},\{3\},\{4\}$.
    Moreover, for $\alpha_{i_1}=\alpha_{i_2}=0$, we find that there exists $j\in\{i_1,i_2\}$ satisfying $\beta_{j}=1$ and $\beta_{l}=\alpha_{l}$ for $l\in \{i_1,i_2\}\backslash\{j\}$.
    Thus, $j$ can be $i_1$ or $i_2$, which implies that there are $2$ values for $j$.
    In this case, there are $2\times3\times3=18$ edges corresponding to $01$, see Table \ref{table:case3}.
    \begin{table}[t]
\begin{center}
\begin{tabular}{|c|c|c|c|c|c|c|c|c|c|}
  \hline
  $(0,0,2)$&$(0,0,3)$   &$(0,0,4)$&$(0,2,0)$&$(0,3,0)$&$(0,4,0)$&$(2,0,0)$&$(3,0,0)$&$(4,0,0)$ \\
  $(1,0,2)$&$(1,0,3)$   &$(1,0,4)$&$(1,2,0)$&$(1,3,0)$&$(1,4,0)$&$(2,1,0)$&$(3,1,0)$&$(4,1,0)$\\
  \hline
   $(0,0,2)$&$(0,0,3)$   &$(0,0,4)$&$(0,2,0)$&$(0,3,0)$&$(0,4,0)$&$(2,0,0)$&$(3,0,0)$&$(4,0,0)$ \\
   $(0,1,2)$&$(0,1,3)$   &$(0,1,4)$&$(0,2,1)$&$(0,3,1)$&$(0,4,1)$&$(2,0,1)$&$(3,0,1)$&$(4,0,1)$ \\
  \hline
\end{tabular}
\caption{Edges in $F^2_{3\times1}(S_4)$ corresponding to $01$ for $r_0=1$ and $r_1=0$. 
}
\label{table:case3}
\end{center}
\end{table}

 {\bf Case 4.} $r_0=1$ and $r_1=1$.
    Here, $\alpha_{i_1}=\alpha_{i_2}=0$ and $\alpha_{i_3}=1$. 
    Then, there are $3$ possible ordered pairs of $(i_1,i_2,i_3)$, which are $(1,2,3), (1,3,2)$, and $(2,3,1)$. 
    Moreover, for $\alpha_{i_1}=\alpha_{i_2}=0$, we see that there exists $j\in \{i_1,i_2\}$ satisfying $\beta_{j}=1$ and $\beta_{l}=\alpha_{l}$ for $l\in \{i_1,i_2\}\backslash\{j\}$.
    Thus, $j$ can be $i_1$ or $i_2$, which implies that there are $2$ values for $j$.
    In this case, there are $2\times3=6$ edges corresponding to $01$, see Table \ref{table:case4}.
    Therefore, the number of edges in $F^2_{3\times1}(S_4)$ corresponding to $01$ is $27+18+18+6=69$.
    \begin{table}[t]
\begin{center}
\begin{tabular}{|c|c|c|c|c|c|c|c|c|c|}
  \hline
  $(0,0,1)$&$(0,1,0)$   &$(1,0,0)$&$(0,0,1)$&$(0,1,0)$   &$(1,0,0)$ \\
  $(1,0,1)$&$(1,1,0)$   &$(1,1,0)$&$(0,1,1)$&$(0,1,1)$   &$(1,0,1)$ \\ 
  \hline
\end{tabular}
\caption{Edges in $F^2_{3\times1}(S_4)$ corresponding to $01$ for $r_0=1$ and $r_1=1$. }
\label{table:case4}
\end{center}
\end{table}

Alternatively, from (\ref{eq:edge-fk1s}), we have 
    \begin{eqnarray*}
        &&\sum_{r_0=0}^{1}\sum_{r_1=0}^{1}(r_0+1)\binom{3}{r_0+1,r_1}h(3,3-r_0-r_1-1,2)\\
        &=&1\cdot\binom{3}{1}\cdot h(3,2,2)+1\cdot\binom{3}{1,1}\cdot h(3,1,2)+2\cdot \binom{3}{2,0}\cdot h(3,1,2)\\
\end{eqnarray*}
         \begin{equation*}
        +2\cdot \binom{3}{2,1}\cdot h(3,0,2)=27+18+18+6=69.
    \end{equation*}
\end{example}

\section{The Cartesian product $F_{k\times 1}^k(G)\cong G\Box\stackrel{(k)}{\cdots}\Box G$}
\label{sec:prod-cartesia}

In this case, the $k$ tokens are different, and the maximum number of tokens per vertex is $k$.
The supertoken graph $F^{k}_{k\times 1}(G)\cong G\Box\stackrel{(k)}{\cdots}\Box G$ is the Cartesian $k$-th product of $G$ by itself. Then, $F^{k}_{k\times 1}(G)$ has order $n^k$ and size $mkn^{k-1}$.
See an example in Figure~\ref{C_4+2-supertokens(C_4)}$(e)$ for the case $G=C_4$ ($n=4$) and 2 tokens of different colors.

\begin{proposition}\label{prop:cartesian}
If $G=(V,E)$ is a (not necessarily regular) graph, then the supertoken graph $F^{k}_{k\times 1}(G)$ is isomorphic to the Cartesian product $G\Box\stackrel{(k)}{\cdots}\Box G$.
\end{proposition}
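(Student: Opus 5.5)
The plan is to write down the obvious bijection and check that it preserves adjacency in both directions. Since the $k$ tokens are all different and the bound $s=k$ is never binding, a vertex of $F^{k}_{k\times 1}(G)$ is just an ordered $k$-tuple $(\alpha_1,\ldots,\alpha_k)$ with each $\alpha_i\in V$. Here $\alpha_i$ is the vertex of $G$ holding token $i$, and repetitions are allowed without restriction. So the vertex set is exactly $V^k$, which is also the vertex set of $G\Box\stackrel{(k)}{\cdots}\Box G$. I take the identity map $\phi(\alpha_1,\ldots,\alpha_k)=(\alpha_1,\ldots,\alpha_k)$ as the candidate isomorphism. It is a bijection, and this is consistent with the order $n^k$ stated above.

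Next I check edges. Recall that in the $k$-fold Cartesian product, $(\alpha_1,\ldots,\alpha_k)\sim(\beta_1,\ldots,\beta_k)$ if and only if there is a unique index $j$ with $\alpha_j\beta_j\in E$ and $\alpha_l=\beta_l$ for all $l\neq j$. This follows by induction on $k$ from the definition for two factors. In $F^{k}_{k\times 1}(G)$, two vertices are adjacent exactly when one is obtained from the other by moving a single token along an edge of $G$. Since tokens are distinguishable, moving token $j$ from $\alpha_j$ to an adjacent vertex $\beta_j$ changes only the $j$-th coordinate, and the capacity condition is vacuous when $s=k$. Every such move is therefore feasible, and it gives precisely the Cartesian-product adjacency. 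Conversely, every Cartesian-product edge is realised by moving token $j$ along the edge $\alpha_j\beta_j$.

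The only point needing care, which I expect to be the main (mild) obstacle, is how the paper's phrase ``the symmetric difference of their corresponding multisets is a pair of adjacent vertices'' reads for ordered tuples. I will interpret it as differing in exactly one labelled position $j$, with the old and new entries adjacent in $G$. I need to rule out, for instance, two tuples that differ in two coordinates by a transposition. Such tuples would have equal underlying multisets but are not related by a single token move, so they are not adjacent. Once this reading is fixed, the identity map preserves adjacency and non-adjacency, so $F^{k}_{k\times 1}(G)\cong G\Box\stackrel{(k)}{\cdots}\Box G$. Regularity of $G$ is never used. As a sanity check, the size count $mkn^{k-1}$ works out: there are $k$ choices of coordinate, $m$ edges for that coordinate, and $n^{k-1}$ choices for the other coordinates.
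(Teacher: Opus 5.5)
Your proposal is correct and follows essentially the same route as the paper. Both identify a vertex of $F^{k}_{k\times 1}(G)$ with the $k$-tuple recording which vertex holds each token, and both observe that a single token move along an edge of $G$ changes exactly one coordinate to an adjacent vertex, which is precisely adjacency in the Cartesian product. Your extra care in reading the symmetric-difference definition for ordered tuples (for example, ruling out transpositions) is something the paper leaves implicit.
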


\begin{proof}
Let $F^k=F^{k}_{k\times 1}(G)$ and $G^k=G\Box\stackrel{(k)}{\cdots}\Box G$. Each vertex of $F^k$ can be represented by a vector $(u_1,u_2,\ldots,u_k)$, where $u_i\in V$ is the vertex with token $i\in\{1,\ldots,k\}$. This vertex is adjacent to the vertices
\begin{equation}
\begin{split}
& (v_1,u_2,\ldots,u_k) \mbox{ with } v_1\sim u_1 \mbox{ (token 1 moved from } u_1 \mbox{ to } v_1),\\
& (u_1,v_2,\ldots,u_k) \mbox{ with } v_2\sim u_2 \mbox{ (token 2 moved from } u_2 \mbox{ to } v_2),\\
& \hskip1.25cm \vdots\\
& (u_1,u_2,\ldots,v_k) \mbox{ with } v_k\sim u_k \mbox{ (token $k$ moved from } u_k \mbox{ to } v_k).
\end{split}
\label{eq:vertexs-adj}
\end{equation}
Besides, each vertex of $G^k$ is also represented by a vector $(u_1,u_2,\ldots,u_k)$, where $u_i$ is a vertex of the $i$-th factor (copy of $G$) of the product $G\Box\stackrel{(k)}{\cdots}\Box G$, and its adjacent vertices are as in~\eqref{eq:vertexs-adj}. This proves the claimed isomorphism.
\end{proof}

\end{document}